\renewcommand{\phi}{\varphi}
\renewcommand{\epsilon}{\varepsilon}
\renewcommand{\theta}{\vartheta}
\def\ZZ{{\mathbf Z}}
\def\NN{{\mathbf N}}
\def\CC{{\mathbf C}}
\def\AAA{{\mathbf A}}
\def\RR{{\mathbf R}}
\def\QQ{{\mathbf Q}}
\def\cH{\mathcal{H}}
\def\cJ{\mathcal{J}}
\def\cA{\mathcal{A}}
\def\cF{\mathcal{F}}
\def\cP{\mathcal{P}}
\def\cO{\mathcal{O}}
\def\fra{\mathfrak{a}}
\def\frb{\mathfrak{b}}
\def\frm{\mathfrak{m}}
\def\O{\mathcal{O}}
 \DeclareMathOperator{\Spec}{Spec}
 \DeclareMathOperator{\lct}{lct}
 \DeclareMathOperator{\ord}{ord}
\DeclareMathOperator{\LCT}{LCT}
\def\.{\cdot}
\def\~{\widetilde}
\def\^{\widehat}
\def\o{\circ}
\newcommand{\llbracket}{[\negthinspace[}
\newcommand{\rrbracket}{]\negthinspace]}
\newtheorem{lemma}{Lemma}[section]
\newtheorem{theorem}[lemma]{Theorem}
\newtheorem{corollary}[lemma]{Corollary}
\newtheorem{proposition}[lemma]{Proposition}
\theoremstyle{definition}
\newtheorem{remark}[lemma]{Remark}
\newtheorem{example}[lemma]{Example}
\theoremstyle{remark}
\newtheorem*{remark*}{Remark}
\newtheorem*{note*}{Note}
\begin{document}

\title{Sequences of LCT-polytopes}

\author[A. Libgober]{Anatoly~Libgober}

\address{Department of Mathematics, University of
Illinois at Chicago, 851 South Morgan Street,
Chicago, IL 60607-7045, USA}
\email{\tt libgober@math.uic.edu}

\author[M. Musta\c{t}\u{a}]{Mircea~Musta\c{t}\u{a}}

\address{Department of Mathematics, University of Michigan,
530 Church Street, 
Ann Arbor, MI 48109, USA}
\email{{\tt mmustata@umich.edu}}

\markboth{A. LIBGOBER AND M. MUSTA\c{T}\u{A}}
{ON SEQUENCES OF LCT-POLYTOPES}

\begin{abstract}
To $r$ ideals on a germ of  smooth variety $X$ one attaches a rational polytope
in $\RR_+^r$ (the \emph{LCT-polytope}) that generalizes the notion of log canonical threshold 
in the case of one ideal. 
We study these polytopes, and prove a strong form of the Ascending Chain Condition in this setting:
we show that if a sequence $(P_m)_{m\geq 1}$ of LCT-polytopes in $\RR_+^r$ converges to a compact 
subset $Q$ in the Hausdorff metric, then $Q=\bigcap_{m\geq m_0}P_m$ for some $m_0$, and 
$Q$ is an LCT-polytope.
\end{abstract}

\thanks{2010\,\emph{Mathematics Subject Classification}.
 Primary 14B05; Secondary 14J17, 14E30.
\newline The first author was patially supported by NSF grant 
DMS-0705050. The second author was partially supported by
 NSF grant DMS-0758454 and
  a Packard Fellowship}

\maketitle

\section{Introduction}

Let $X$ be a smooth algebraic variety over an algebraically closed field $k$,
of characteristic zero. To a nonzero ideal $\fra$ on $X$, and to a point $x$ in the zero
locus of $\fra$ one associates the local log canonical threshold $\lct_x(\fra)$. This  positive
rational number is an invariant of the singularities of $\fra$ at $x$ that plays a fundamental role
in birational geometry (see for example \cite{Kol} and \cite{EM}).

To $r$ ideals $\fra_1,\ldots,\fra_r$ on $X$, and to a point $x$ that lies in the zero locus
of each $\fra_i$ we associate the \emph{LCT-polytope} $\LCT_x(\fra_1,\ldots,\fra_r)$.
This is a rational convex polytope in $\RR_+^r$ that
describes the log canonical thresholds at $x$ of all products $\fra_1^{m_1}\cdots\fra_r^{m_r}$.
More precisely, it consists of those $(\lambda_1,\ldots,\lambda_r)\in\RR_+^r$ such that
the pair $(X,\fra_1^{\lambda_1}\cdots\fra_r^{\lambda_r})$ is log canonical at $x$.
In the case $r=1$, the polytope $\LCT_x(\fra)$ is the segment $[0,\lct_x(\fra)]$.
These polytopes are a special case of the polytopes of quasi-adjunction
introduced and studied by the first author in \cite{alexhodge} and \cite{ample}.
Even if one is only interested in the singularities of one ideal $\fra$, studying the LCT-polytopes
$\LCT(\fra,\frb)$ for various auxiliary ideals $\frb$ gives important information.

Shokurov conjectured in \cite{Sho} that log canonical thresholds in fixed dimension satisfy the Ascending Chain Condition. The conjecture is made in a general setting in which the ambient variety is allowed to have log canonical singularities. Birkar related the general form of the conjecture
 to the Termination of Flips conjecture (see \cite{Birkar} for the precise statement). In the special setting of smooth ambient varieties, Shokurov's conjecture
was proved by de Fernex, Ein and the third author in \cite{dFEM}, building on ideas and results from
\cite{dFM} and \cite{Kol1}.

In this note we consider the Ascending Chain Condition for LCT-polytopes. In particular,
we show that given any sequence of LCT-polytopes in $\RR^r$ 
(corresponding to ideals on smooth $n$-dimensional varieties)
$P_1\subseteq P_2\subseteq\ldots$, the sequence is eventually stationary.
In fact, we prove a much stronger assertion.

We consider the polytopes in $\RR^r$ as elements in the space ${\mathcal H}_r$ of all compact subsets of $\RR^r$ endowed with the Hausdorff metric. This is a complete metric space,
and the subsets lying in a given compact subset $K\subset\RR^r$ form a compact
subspace of ${\mathcal H}_r$.
 It is easy to see that every LCT-polytope as above is contained 
in the cube $[0,n]^r\subseteq\RR^r$. It follows that
 every sequence of LCT-polytopes has a convergent subsequence
to some compact subset $Q\subseteq [0,n]^r$. 

Our main result says that if a sequence of LCT-polytopes $(P_m)_{m\geq 1}$ converges
to the compact set $Q$ in the Hausdorff metric, then there is $m_0$ such that
$Q=\cap_{m\geq m_0}P_m$. Furthermore, $Q$ is a rational convex polytope. In fact, there 
are ideals $\fra_1,\ldots\fra_s\subset K\llbracket x_1,\ldots,x_n\rrbracket$
(for some $s\leq r$ and some field extension $K$ of $k$) 
such that $Q=\LCT(\fra_1,\ldots,\fra_s)$ (under a suitable linear embedding in $\RR^r$). 
If the ground field $k$ has infinite transcendence degree over $\QQ$ (for example, if
$k=\CC$), then we may take $K=k$.

The proof uses the result in \cite{dFEM} about the ACC property of log canonical thresholds
on smooth varieties of fixed dimension. In fact, we use in an essential way also 
the ideas and the constructions in \emph{loc. cit}. We give an introduction
to the basic properties of LCT-polytopes in the following section, emphasizing the
analogy with the case $r=1$. The main theorems are proved in the last section.

\section{Basics of LCT-polytopes}

In this section we present some basic results about  LCT-polytopes. 
 We always work over an algebraically closed field $k$, of characteristic zero.
 We denote by $\RR_+$ the set of nonnegative real numbers, and by $\NN$ the nonnegative 
 integers.
Our ambient space $X$ is either a smooth variety over $k$, or $\Spec(k\llbracket
x_1,\ldots,x_n\rrbracket)$.
We assume that the reader
is familiar with the results about the usual log canonical threshold, for which we refer to
\cite{Kol}, \S 8 for the finite type case, and to \cite{dFM} for the case of formal power series.

Let $X$ be a regular scheme, as above, and $\fra_1,\ldots,\fra_r$ nonzero ideal sheaves on $X$. 
We put 
$$\LCT(\fra_1,\ldots,\fra_r)=\{\lambda=(\lambda_1,\ldots,\lambda_r)\in\RR_+^r\mid 
(X,\fra_1^{\lambda_1}\cdots\fra_r^{\lambda_r})\,\text{is log canonical}\}.$$
We will mostly be concerned with a local variant of this definition: if $x\in X$ is a closed point, then 
$$\LCT_x(\fra_1,\ldots,\fra_r)=\{\lambda=(\lambda_1,\ldots,\lambda_r)\in\RR_+^r\mid 
(X,\fra_1^{\lambda_1}\cdots\fra_r^{\lambda_r})\,\text{is log canonical at}\,x\}.$$
If the ideals $\fra_1,\ldots,\fra_r$ are principal, with $\fra_i=(f_i)$, then we simply write
$\LCT(f_1,\ldots,f_r)$ and $\LCT_x(f_1,\ldots,f_r)$.

The above sets can be explicitly described in terms of a log resolution, as follows. Suppose that
$\pi\colon Y\to X$ is a log resolution of $\fra_1\cdot\ldots\cdot\fra_r$. Recall that this means that
$Y$ is nonsingular, $\pi$ is proper and birational, and we have a simple normal crossings
divisor $\sum_{j=1}^NE_j$ on $Y$ such that 
$$K_{Y/X}=\sum_{j=1}^N\kappa_jE_j,\,\text{and}\, \,
\fra_i\cdot\cO_Y=\cO_Y\left(-\sum_{j=1}^N\alpha_{i,j}E_j\right)\,\text{for}\,1\leq i\leq r.$$
The existence of such a log resolution in the formal power series case is a consequence of the results in \cite{Temkin}.

It follows from the description of log canonical pairs in terms of a log resolution that
$\LCT(\fra_1,\ldots,\fra_r)$ consists precisely of those $\lambda\in\RR_+^r$ such that
\begin{equation}\label{eq1}
\sum_{i=1}^r\alpha_{i,j}\lambda_i\leq \kappa_j+1\,\,\text{for}\,1\leq j\leq N.
\end{equation}
Similarly, $\LCT_x(\fra_1,\ldots\fra_r)$ is cut out by the equations
in (\ref{eq1}) corresponding to those $j$
such that $x\in\pi(E_j)$.

It follows from the above description that both $\LCT(\fra_1,\ldots,\fra_r)$ and 
$\LCT_x(\fra_1,\ldots,\fra_r)$ are rational polyhedra (that is, they are cut out in $\RR^r$
by finitely many affine linear inequalities, with rational coefficients). We call
$\LCT(\fra_1,\ldots,\fra_r)$ and $\LCT_x(\fra_1,\ldots,\fra_r)$ 
the \emph{LCT-polyhedron} of $\fra_1,\ldots,\fra_r$, and respectively,
the \emph{LCT-polyhedron at} $x$ of $\fra_1,\ldots,\fra_r$.

\begin{remark}
The above polyhedra are $r$-dimensional. Indeed, note that they contain the origin, as well
as $\lambda e_i$ for $0<\lambda\ll 1$ (here $e_1,\ldots,e_r$ is the standard basis of
$\RR^r$).
\end{remark}

The following lemma follows immediately from the description of LCT-polyhedra
in terms of a log resolution.

\begin{lemma}\label{lem1}
Given the nonzero ideals $\fra_1,\ldots,\fra_r$, there are closed points 
$x_1,\ldots,x_m\in X$ such that
$$\LCT(\fra_1,\ldots,\fra_r)=\bigcap_{j=1}^m\LCT_{x_j}(\fra_1,\ldots,\fra_r).$$
\end{lemma}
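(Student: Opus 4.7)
The plan is to use the explicit description of the LCT-polyhedra in terms of a log resolution, and to simply extract one closed point from the image of each prime divisor appearing in the resolution. Let $\pi\colon Y\to X$ be a log resolution of $\fra_1\cdots\fra_r$ as in the setup just before the lemma, with simple normal crossings divisor $\sum_{j=1}^N E_j$ on $Y$. Then $\LCT(\fra_1,\ldots,\fra_r)$ is cut out by the $N$ inequalities (\ref{eq1}), while for a closed point $x\in X$ the polyhedron $\LCT_x(\fra_1,\ldots,\fra_r)$ is cut out by exactly the sub-collection of those inequalities indexed by the $j$'s with $x\in\pi(E_j)$. So the goal is to choose a finite collection of closed points that collectively ``see'' every component $E_j$.

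For each $j=1,\ldots,N$, the image $\pi(E_j)\subseteq X$ is a nonempty closed subset, since $\pi$ is proper and $E_j$ is a nonempty divisor. In the variety setting every such closed subset contains a closed point; in the formal setting $X=\Spec(k\llbracket x_1,\ldots,x_n\rrbracket)$, the unique closed point lies in every nonempty closed subset. In either case I can pick a closed point $x_j\in\pi(E_j)$. By construction, the inequality of (\ref{eq1}) corresponding to $E_j$ is among the defining inequalities of $\LCT_{x_j}(\fra_1,\ldots,\fra_r)$.

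With these $x_1,\ldots,x_N$ in hand, I verify the two inclusions. The inclusion $\LCT(\fra_1,\ldots,\fra_r)\subseteq\bigcap_j\LCT_{x_j}(\fra_1,\ldots,\fra_r)$ is automatic, because each $\LCT_{x_j}$ is defined by a sub-collection of the inequalities defining $\LCT$, so any $\lambda$ satisfying all of (\ref{eq1}) certainly satisfies the subsystem for each $x_j$. Conversely, if $\lambda\in\bigcap_j\LCT_{x_j}$, then for every $j$ the inequality indexed by $E_j$ is satisfied (since it is among the defining inequalities of $\LCT_{x_j}$), so $\lambda$ satisfies the full system (\ref{eq1}) and therefore lies in $\LCT(\fra_1,\ldots,\fra_r)$.

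There is no real obstacle here; the statement is essentially a bookkeeping consequence of the fact that (\ref{eq1}) gives the global polyhedron while the local polyhedron at $x$ is obtained by discarding the inequalities coming from divisors $E_j$ whose image avoids $x$. The only mild point to check is the existence of a closed point in each $\pi(E_j)$, which is routine in both ambient settings considered in the paper.
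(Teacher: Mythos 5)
Your proof is correct and matches the paper's approach: the paper only remarks that the lemma follows immediately from the log-resolution description of the LCT-polyhedra, and you have filled in exactly that argument (choosing one closed point in each $\pi(E_j)$ and comparing the defining systems of inequalities).
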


\noindent Because of this lemma, from now on we will focus on the local LCT-polyhedra.

\begin{lemma}\label{lem2}
Let $\fra_1,\ldots\fra_r$ be nonzero ideals on $X$.
\begin{enumerate}
\item[i)] If $x\in {\rm Supp}(V(\fra_i))$, then $\{\lambda_i\mid\lambda\in
\LCT_x(\fra_1,\ldots,\fra_r)\}$ is bounded.
\item[ii)] If $x\not\in {\rm Supp}(V(\fra_r))$, then
$\LCT_x(\fra_1,\ldots,\fra_r)=\LCT_x(\fra_1,\ldots,\fra_{r-1})\times\RR_+$. 
\end{enumerate}
\end{lemma}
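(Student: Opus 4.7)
The plan is to reduce both parts to a direct inspection of the inequalities (\ref{eq1}) coming from a log resolution $\pi\colon Y\to X$ of $\fra_1\cdots\fra_r$, with exceptional/proper transform divisors $\{E_j\}_{j=1}^N$ and data $K_{Y/X}=\sum_j\kappa_jE_j$, $\fra_i\cO_Y=\cO_Y(-\sum_j\alpha_{i,j}E_j)$. Recall from the discussion preceding the lemma that $\LCT_x(\fra_1,\dots,\fra_r)$ is cut out in $\RR_+^r$ precisely by those inequalities (\ref{eq1}) indexed by the $j$'s with $x\in\pi(E_j)$. So both parts are really statements about which coefficients $\alpha_{i,j}$ can or must vanish for such $j$.

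For (i), I would show that $x\in\Supp(V(\fra_i))$ forces the existence of some index $j$ with $x\in\pi(E_j)$ and $\alpha_{i,j}>0$. The key observation is that, since $\fra_i\cO_Y=\cO_Y(-\sum_j\alpha_{i,j}E_j)$, the support of $V(\fra_i)$ equals $\pi\bigl(\Supp\sum_j\alpha_{i,j}E_j\bigr)$; equivalently, $\fra_{i,x}\neq\cO_{X,x}$ means $\fra_i\cO_Y$ fails to be trivial at some point of $\pi^{-1}(x)$, so there is some $j$ with $E_j\cap\pi^{-1}(x)\neq\emptyset$ (i.e.\ $x\in\pi(E_j)$) and $\alpha_{i,j}>0$. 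For this $j$, the relevant inequality from (\ref{eq1}) reads $\sum_k\alpha_{k,j}\lambda_k\leq\kappa_j+1$; since every $\lambda_k\geq 0$ and every $\alpha_{k,j}\geq 0$, we can discard all terms $k\neq i$ and conclude
\[
\lambda_i \;\leq\; \frac{\kappa_j+1}{\alpha_{i,j}}
\]
for every $\lambda\in\LCT_x(\fra_1,\dots,\fra_r)$, which is the desired bound.

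For (ii), the hypothesis $x\notin\Supp(V(\fra_r))$ means $\fra_r|_U=\cO_U$ on some open neighbourhood $U$ of $x$. Then $\fra_r\cO_Y$ is trivial on $\pi^{-1}(U)$, so $\alpha_{r,j}=0$ for every $j$ such that $E_j$ meets $\pi^{-1}(U)$; in particular $\alpha_{r,j}=0$ for every $j$ with $x\in\pi(E_j)$. Consequently, every inequality from (\ref{eq1}) defining $\LCT_x(\fra_1,\dots,\fra_r)$ involves only $\lambda_1,\dots,\lambda_{r-1}$, and these are exactly the inequalities cutting out $\LCT_x(\fra_1,\dots,\fra_{r-1})\subseteq\RR_+^{r-1}$. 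Intersecting with $\lambda_r\geq 0$ yields the product decomposition.

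Neither part should present a real obstacle; the only point requiring a little care is the set-theoretic identification $V(\fra_i)=\pi(\Supp\sum_j\alpha_{i,j}E_j)$ used in (i), which follows from $\pi$ being proper, birational, and from $\fra_i\cO_Y$ being an invertible sheaf. Once this is in hand, both statements are formal consequences of (\ref{eq1}) together with the non-negativity of the $\alpha_{i,j}$ and of the coordinates $\lambda_k$.
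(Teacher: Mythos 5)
Your proof is correct and follows the same route as the paper: part (i) is exactly the paper's argument (find $j$ with $x\in\pi(E_j)$ and $\alpha_{i,j}>0$, then read off the bound $\lambda_i\leq(\kappa_j+1)/\alpha_{i,j}$ from (\ref{eq1})), and part (ii), which the paper dismisses as clear, you justify by the same log-resolution description, noting $\alpha_{r,j}=0$ for all relevant $j$.
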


\begin{proof}
With the notation in (\ref{eq1}), we see that if $x\in {\rm Supp}(V(\fra_i))$, then there is
$j$ with $\alpha_{i,j}>0$, and such that $x\in\pi(E_j)$. It follows that if $\lambda\in
\LCT_x(\fra_1,\ldots,\fra_r)$, then $\lambda_i\leq (\kappa_j+1)/\alpha_{i,j}$, which gives
i). The assertion in ii) is clear.
\end{proof}

In light of this lemma, it is enough to study $\LCT_x(\fra_1,\ldots,\fra_r)$ for
$x\in\bigcap_i{\rm Supp}(V(\fra_i))$. In this case we see that the LCT-polyhedron at $x$
of $\fra_1,\ldots,\fra_r$ is bounded, hence it is a polytope. We will henceforth refer to it as
the \emph{LCT-polytope at} $x$ of $\fra_1,\ldots,\fra_r$.

\begin{remark} A related construction, giving 
  polyhedra as invariants of  tuples of divisors, 
was used in \cite{ample} and \cite{alexhodge}. Consider
a collection of germs 
$$f_1(x_1,\ldots,x_{n+1}),\ldots,f_r(x_1,\ldots,x_{n+1})$$
of reduced local equations of divisors 
$D_i=V(f_i)$ at a point $P \in X=\CC^{n+1}$, 
that we assume to have isolated non-normal crossings (cf. \cite{ample}). 
With each $\phi \in \O_P$ one associates the top degree form:
\begin{equation}\label{quasiadjform}
\omega_{\phi}(j_1,\ldots,j_r \vert m_1,\ldots,m_r)=
{{{f_1^{{j_1-m_1+1} \over m_1}\cdot\ldots\cdot 
f_r^{{j_r-m_r+1} \over {m_r}}} \phi(x_1,\ldots,x_{n+1}) dx_1 \wedge \ldots
\wedge dx_{n+1}}}
\end{equation} on the unramified covering 
$X_{m_1,\ldots,m_r}$ of $X\smallsetminus\sum_i D_i$ with Galois 
group $\oplus_i \ZZ/m_i\ZZ$. The form $\omega_{\phi}$ 
extends to a holomorphic form on a resolution of singularities of 
a compactification $\overline{X}_{m_1,\ldots,m_r}$ of $X_{m_1,\ldots,m_r}$
if and only if  $({{j_1+1} \over {m_1}},\ldots,{{j_r+1} \over {m_r}})
\in \RR^r$ satisfies a system of linear inequalities, i.e. it  
belongs to a polytope $\cP(\phi \vert f_1,\ldots,f_r)$.
This system can be described
in terms of a log-resolution $\pi\colon Y \rightarrow X$ 
of the principal ideals $(f_1\cdots f_r)$ as above, using 
the resolution of $\overline{X}_{m_1,\ldots,m_r}$ 
given by a resolution of the
quotient singularities of the normalization 
of $\overline{X}_{m_1,\ldots,m_r} \times_X Y$.
This leads to the following explicit collection 
of inequalities describing when  $\lambda=(\lambda_1,\ldots,\lambda_r)\in 
\cP(\phi \vert f_1,\ldots,f_r)$ 
 (cf. \cite[(4)]{alexhodge}):
\begin{equation}\label{quasiadj1}
\sum_{i=1}^r\alpha_{i,j}(1-\lambda_i) 
\leq \kappa_j+1+e_j(\phi)\,\,\text{for}\,1\leq j\leq N.
\end{equation}
Here $\alpha_{i,j}$, $\kappa_j$ are as in (\ref{eq1}), and $e_j(\phi)$ 
is the multiplicity of $\pi^*(\phi)$ along $E_j$.

Vice versa, for a fixed $({{j_1+1} \over {m_1}},\ldots,{{j_r+1} \over {m_r}})$
with $0\leq j_i<m_i$ for all $i$, the set of $\phi\in\cO_P$ 
 such that the given point lies in $\cP(\phi \vert f_1,\ldots,f_r)$
is an ideal $\cA(j_1,\ldots,j_r \vert m_1,\ldots,m_r) \subset \cO_P$ 
(an \emph{ideal of quasi-adjunction}). 

Allowing $\phi$ to run over all elements in $\cO_P$ 
produces a \emph{finite} collection of polytopes in the $[0,1]^r$.
We similarly have a finite collection of ideals of quasi-adjunction.
Moreover, every ideal of quasi-adjunction $\cA$ can be written
as $\cA=\cA(j_1,\ldots,j_r \vert m_1,\ldots,m_r)$ for some point
$({{j_1+1} \over {m_1}},\ldots,{{j_r+1} \over {m_r}})$ that can be 
chosen in the boundary of 
a polytope (\ref{quasiadj1}). The subset of the 
boundary consisting of those
$({{j_1+1} \over {m_1}},\ldots,{{j_r+1} \over {m_r}})$ 
defining a particular $\cA$ is  
a polyhedral subset (face of quasi-adjunction). Therefore one has 
a correspondence between faces $\cF$ 
of the polytopes $\cP(\phi \vert f_1,\ldots,f_r)$
and certain ideals $\cA(\cF)$ in $\cO_P$.

The polytope (\ref{quasiadj1}) corresponding to $\phi=1$ 
coincides with the image of the LCT-polytope (\ref{eq1}) for $\fra_i=(f_i)$
via the affine map $(\lambda_i)\to(1-\lambda_i)$.
 An ideal
of quasi-adjunction  
$\cA(\cF)$ associated to a point 
$({{j_1+1} \over {m_1}},\ldots,{{j_r+1} \over {m_r}}) \in \cF$
coincides with the multiplier ideal of the divisor 
$\sum \mu_i D_i$, where $\mu_i=
1-{{j_i+1} \over {m_i}}-\epsilon$, with $0<\epsilon\ll 1$.
Indeed, strict inequality in the conditions 
(\ref{quasiadj1}) is equivalent to 
$\phi$ being a section of $\pi_*(K_{Y/X}-\lfloor\sum_i(1-\lambda_i)\pi^*(D_i)\rfloor)$.
In the case $r=1$, each polytope (\ref{quasiadj1})
is a segment $[\alpha,1]$, and the face of quasi-adjunction $\alpha$
is a jumping coefficient for the multiplier ideals of $f=f_1$. If the
singularity 
of $f$ at $P$ is isolated, the collection of such $\alpha$ coincides with  
 the subset of the spectrum of the singularity of $f$ in the interval $[0,1]$.
\end{remark}

\begin{example}\label{example1}
If $r=1$, then $\LCT(\fra)=[0,\lct(\fra)]$, and $\LCT_x(\fra)=[0,\lct_x(\fra)]$.
\end{example}

\begin{example}\label{2_0}
If $\fra_i=(x_1^{q_{i,1}}\cdots x_n^{q_{i,n}})\subseteq k[x_1,\ldots,x_n]$, then 
$$\LCT(\fra_1,\ldots,\fra_r)=\left\{\lambda=(\lambda_1,\ldots,\lambda_r)\in\RR_+^r\,\,\vline\,\,
\sum_{i=1}^rq_{i,j}\lambda_i\leq 1\,\text{for}\,1\leq j\leq n\right\}.$$
\end{example}

\begin{example}\label{example2}
One can generalize the previous example to the case of arbitrary monomial ideals. 
This extends Howald's Theorem from \cite{Howald}, which is the case $r=1$.
Suppose that $\fra_1,\ldots,\fra_r$ are nonzero ideals in 
$k[x_1,\ldots,x_n]$ generated by monomials.
Let $P_{\fra_i}$ denote the Newton polyhedron of $\fra_i$, that is,
$P_{\fra_i}$ is the convex hull of $\{u\in \NN^n\mid x^u\in\fra_i\}$. Here, if
$u=(u_1,\ldots,u_n)\in\NN^n$, we denote by $x^u$ the monomial $x_1^{u_1}\cdots x_n^{u_n}$.
By taking a toric resolution of $\fra_1\cdot\ldots\cdot\fra_r$, it is easy to see that
$$\LCT(\fra_1,\ldots,\fra_r)=\LCT_0(\fra_1,\ldots,\fra_r)=\left\{(\lambda_1,\ldots,\lambda_r)\in
\RR_+^r\,\,\vline\,\, e\in \sum_{i=1}^r\lambda_i P_{\fra_i}\right\},$$
where $e=(1,\ldots,1)\in\RR^n$.
\end{example}

\begin{example}\label{example3}
 In the case of plane curves, readily available 
explicit resolutions allow the computation of LCT-polytopes.
In terms of the polytopes of quasi-adjunction considered in \cite{alexhodge}, 
the LCT-polytope is the image of the polytope ``farthest'' 
from the origin along the line $x_1=\ldots=x_r$ under  the
change of variables $(\lambda_i) \to (1-\lambda_i)$.
\par \noindent a) If $f=x$, $g=x-y^2\in k[x, y]$, then
\begin{equation}
 \LCT_0(f,g)= \{(\lambda_1,\lambda_2)\in\RR_+^2 \mid \lambda_1\leq 1,\,
 \lambda_2\leq 1,\, \lambda_1+\lambda_2 \le 3/2 \}.
\end{equation}
\noindent b) If $f=x^2+y^5$, $g=x^5+y^2\in k[x, y]$, then
$\LCT_0(f,g)$ is the intersection of the unit square and of the half planes
\begin{equation}
  10\lambda_1+4\lambda_2 \le 7,\,\,\,\, 4\lambda_1+10 \lambda_2 \le 7.
\end{equation}
\end{example}

\begin{remark}
Even if one is interested in the singularities of an ideal $\fra$, considering the 
LCT-polytopes for several ideals gives interesting information. Suppose, for example,
that $\fra$ is a nonzero ideal on $X$, and $x\in X$ is a closed point in ${\rm Supp}(V(\fra))$.
One defines a function $\phi\colon \RR_+\to \RR_+$ by 
$\phi(t)=\lct_x(\fra\cdot\frm_x^t)^{-1}$, where $\frm_x$ is the ideal defining $x$. 
This is a convex nondecreasing function that encodes useful information about the 
singularities of $\fra$ at $x$.
For example, one can show that the right derivative $\phi'_r(0)$ is equal to
$\lct_x(\fra)^{-1}\cdot\max\frac{\ord_E(\frm_x)}{\ord_E(\fra)}$, where the
maximum is over all divisors $E$ over $X$ that compute $\lct_x(\fra)$.

Note that  $\phi$ is determined by $P:=\LCT_x(\fra,\frm_x)$, and conversely. Indeed,
$\phi(t)=\alpha$ if and only if $\lct(\fra^{1/\alpha}\cdot\frm_x^{t/\alpha})=1$. 
Therefore $\phi(t)$ is characterized by the fact that $(1,t)$
lies on the boundary of
$\phi(t)\cdot P$.
\end{remark}

We record in the following proposition some general properties of LCT-polytopes.
We denote by $e_1,\ldots,e_r$ the standard basis in $\RR^r$. For $\lambda=(\lambda_i)$
and $\mu=(\mu_i)$ in $\RR_+^r$, we put $\lambda\preceq\mu$ if $\lambda_i\leq\mu_i$ for all $i$.
We also put $\lambda\prec\mu$ if  $\lambda_i\leq \mu_i$ for all $i$, with strict inequality when
$\mu_i>0$. 

\begin{proposition}\label{prop1}
 Suppose that $\fra_1,\ldots,\fra_r$
are nonzero ideals on $X$, and $x\in X$ is a closed point such that $x\in {\rm Supp}(V(\fra_i))$  for all $i$.
\begin{enumerate}
\item[i)] If $m_1,\ldots,m_r$ are positive integers, then
the polytope $\LCT_x(\fra_1^{m_1},\ldots,\fra_r^{m_r})$ is the image of  $\LCT_x(\fra_1,\ldots,\fra_r)$
by the map $(u_1,\ldots,u_r)\to (u_1/m_1,\ldots,u_r/m_r)$.
\item[ii)] If $\fra'_i\subseteq\fra_i$ for every $i$, then
$\LCT_x(\fra'_1,\ldots,\fra'_r)\subseteq\LCT_x(\fra_1,\ldots,\fra_r)$.
\item[iii)] $\LCT_x(\fra_1,\ldots,\fra_r)\subseteq\prod_{i=1}^r[0,\lct_x(\fra_i)]\subseteq
[0,n]^r$, where $n=\dim(X)$.
\item[iv)] The simplex
$$\left\{\lambda\in\RR_+^r\mid\sum_{i=1}^r\frac{1}{\lct_x(\fra_i)}\lambda_i\leq 1\right\}$$
is contained in $\LCT_x(\fra_1,\ldots,\fra_r)$.
\item[v)] If $\lambda$, $\lambda'\in\RR_+^r$ are such that $\lambda\preceq\lambda'$, and  
$\lambda'\in \LCT_x(\fra_1,\ldots,\fra_r)$,
then $\lambda\in\LCT_x(\fra_1,\ldots,\fra_r)$.
\end{enumerate}
\end{proposition}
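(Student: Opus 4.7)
The strategy is to read each assertion directly off the defining inequalities (\ref{eq1}) on a fixed log resolution $\pi\colon Y \to X$ of $\fra_1\cdots\fra_r$, keeping only those indices $j$ with $x \in \pi(E_j)$ in the local version. The key structural point is that $\LCT_x(\fra_1,\ldots,\fra_r)$ is cut out of $\RR_+^r$ by finitely many linear inequalities whose coefficients $\alpha_{i,j}$ are \emph{nonnegative}, so monotonicity in each $\lambda_i$ is automatic.

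Parts (v), (i) and (ii) are direct manipulations of these inequalities. For (v): if $\lambda \preceq \lambda'$ and $\lambda'$ is admissible, then $\sum_i \alpha_{i,j}\lambda_i \leq \sum_i \alpha_{i,j}\lambda'_i \leq \kappa_j + 1$, and nonnegativity of the coordinates is preserved. For (i): on the same resolution, replacing $\fra_i$ by $\fra_i^{m_i}$ replaces $\alpha_{i,j}$ by $m_i\alpha_{i,j}$, so the $j$-th inequality becomes $\sum_i \alpha_{i,j}(m_i\lambda_i) \leq \kappa_j + 1$; this says exactly that $\lambda \in \LCT_x(\fra_1^{m_1},\ldots,\fra_r^{m_r})$ iff $(m_1\lambda_1,\ldots,m_r\lambda_r) \in \LCT_x(\fra_1,\ldots,\fra_r)$. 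For (ii): on a simultaneous log resolution of all the $\fra_i$ and $\fra'_i$, the inclusion $\fra'_i \subseteq \fra_i$ forces $\ord_{E_j}(\fra'_i) \geq \ord_{E_j}(\fra_i)$, so admissibility of $\lambda$ for the $\fra'_i$ at once implies admissibility for the $\fra_i$.

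For (iii) and (iv) the plan is to reduce to the one-ideal case (Example \ref{example1}) via convexity. For (iii): given $\lambda \in \LCT_x(\fra_1,\ldots,\fra_r)$, apply (v) to $\lambda_i e_i \preceq \lambda$ to place $\lambda_i e_i$ in the polytope, and Example \ref{example1} then yields $\lambda_i \leq \lct_x(\fra_i)$; the outer inclusion uses the standard estimate $\lct_x(\fra) \leq \dim X$ for a single ideal. For (iv): $\LCT_x(\fra_1,\ldots,\fra_r)$ is convex as an intersection of half-spaces in (\ref{eq1}), contains the origin, and contains each vertex $\lct_x(\fra_i) e_i$ of the asserted simplex by the very definition of $\lct_x(\fra_i)$; convexity then yields the whole simplex. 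The only external ingredient is the bound $\lct_x(\fra) \leq \dim X$ used in (iii), which I would quote from \cite{Kol} or \cite{dFM}; otherwise the whole proposition reduces to routine linear algebra on (\ref{eq1}), and no step presents a real obstacle.
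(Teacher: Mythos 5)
Your proof is correct and follows essentially the same route as the paper: the paper dismisses i)--iii) and v) as immediate from the definition (i.e.\ from the log-resolution description (\ref{eq1}), which is exactly what you spell out) and proves iv) by the same convexity argument using the origin and the points $\lct_x(\fra_i)e_i$. The only detail worth keeping in mind is that in ii) and iii) you implicitly use, as you note, a common log resolution and the hypothesis $x\in{\rm Supp}(V(\fra_i))$ (so that $\fra_i\subseteq\frm_x$ and $\lct_x(\fra_i)\leq n$), both of which are fine.
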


\begin{proof}
All assertions immediately follow from definition, and from familiar facts about singularities of pairs,
see \cite{Kol} and \cite{dFM}.
The assertion in iv) follows from the fact that $\LCT_x(\fra_1,\ldots,\fra_r)$ is convex, and the fact
that the origin, as well as each $\lct_x(\fra_i)e_i$ lies in $\LCT_x(\fra_1,\ldots,\fra_r)$.
\end{proof}

\begin{remark}\label{reduction_principal}
Suppose that $X$ is a nonsingular affine algebraic variety. 
It follows from Proposition~\ref{prop1} iv) that if $f_1,\ldots,f_r\in\cO(X)$, then
$\LCT(f_1,\ldots,f_r)$ is contained in the cube  $[0,1]^r$. On the other hand, if
$\fra_1,\ldots,\fra_r$ are ideals on $X$, and if for every $i$, $g_i\in\fra_i$ is a general linear combination of 
some fixed set of generators of $\fra_i$, then an argument based on Bertini's Theorem as in
\cite[Proposition~9.2.28]{Lazarsfeld} gives
$$\LCT(g_1,\ldots,g_r)=\LCT(\fra_1,\ldots,\fra_r)\cap [0,1]^r.$$
\end{remark}

\begin{remark}\label{completion}
If $\fra_1,\ldots,\fra_r$ are ideals on a smooth variety $X$, and if $x\in X$, then
$\LCT_x(\fra_1,\ldots,\fra_r)=\LCT(\fra_1\cdot\widehat{\cO_{X,x}},\ldots,
\fra_r\cdot\widehat{\cO_{X,x}})$. This follows easily from \cite[Proposition~2.7]{dFM},
that treats the case of log canonical thresholds. Since $\widehat{\cO_{X,x}}\simeq
k\llbracket x_1,\ldots,x_n\rrbracket$, it follows that in order to study the possible
LCT-polytopes in a given dimension $n$, we may restrict to the case when 
$X=\Spec(k\llbracket x_1,\ldots,x_n\rrbracket)$.
\end{remark}

\begin{lemma}\label{intersection_LCT}
If $\fra_1,\ldots,\fra_r$ are nonzero ideals on $X$, and if $\frm_x$ is the ideal defining a closed point 
$x\in X$, then
$$\LCT_x(\fra_1,\ldots,\fra_r)=\bigcap_{q\geq 1}\LCT_x(\fra_1+\frm_x^q,\ldots,\fra_r+\frm_x^q).$$
\end{lemma}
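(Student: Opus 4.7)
The inclusion $\LCT_x(\fra_1,\ldots,\fra_r)\subseteq\bigcap_{q\geq 1}\LCT_x(\fra_1+\frm_x^q,\ldots,\fra_r+\frm_x^q)$ is immediate from Proposition~\ref{prop1}(ii), since $\fra_i\subseteq\fra_i+\frm_x^q$ for every $i$ and every $q$. The substance is the reverse inclusion, and my plan is to establish it via the divisorial characterization of log canonicity: $\lambda=(\lambda_1,\ldots,\lambda_r)$ belongs to $\LCT_x(\fra_1,\ldots,\fra_r)$ if and only if
$$\sum_{i=1}^{r}\lambda_i\,v(\fra_i)\leq A_X(v)$$
for every divisorial valuation $v$ over $X$ with $x\in c_X(v)$, where $A_X(v)$ is the log discrepancy. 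So fix $\lambda$ in the intersection, fix such a $v$, and aim at this inequality.

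If $c_X(v)=\{x\}$, then $v(\frm_x)>0$, and for any $q\geq\max_i v(\fra_i)/v(\frm_x)$ one has $v(\fra_i+\frm_x^q)=\min\{v(\fra_i),qv(\frm_x)\}=v(\fra_i)$, so the hypothesis at this $q$ is the required inequality. The delicate case is $c_X(v)\supsetneq\{x\}$, where $v(\frm_x)=0$ and the hypothesis is vacuous at $v$. Here I would approximate $v=\ord_E$ by divisorial valuations with center $\{x\}$. Realize $E$ as an SNC component on a log resolution $\pi\colon Y\to X$ of $\fra_1\cdots\fra_r\cdot\frm_x$, and pick $y\in E\cap\pi^{-1}(x)$. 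Since $\pi^{*}\frm_x$ is locally at $y$ the principal ideal $\prod_{E_j\ni y}u_j^{\beta_j}$ (with $u_j$ a local equation of $E_j$ and $\beta_j=\ord_{E_j}(\frm_x)$) and $\beta_E=0$, the inclusion $y\in V(\pi^{*}\frm_x)$ forces the existence of an SNC component $E'\ni y$ with $E'\neq E$ and $\beta_{E'}>0$. In local coordinates $(u_1,u_2,\ldots,u_n)$ at $y$ with $u_1,u_2$ defining $E,E'$, for each integer $n\geq 1$ let $v_n$ be the quasi-monomial divisorial valuation of weights $(n,1,0,\ldots,0)$; its center on $Y$ is the generic point of $E\cap E'$, which $\pi$ sends into $\pi(E')=\{x\}$, so $c_X(v_n)=\{x\}$, and a direct computation yields
$$v_n(\fra_i)=n\,\ord_E(\fra_i)+\ord_{E'}(\fra_i),\quad v_n(\frm_x)=\ord_{E'}(\frm_x),\quad A_X(v_n)=n(\kappa_E+1)+(\kappa_{E'}+1).$$

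For each $n$ choose $q=q_n$ large enough that $q_n\,v_n(\frm_x)\geq v_n(\fra_i)$ for all $i$, so $v_n(\fra_i+\frm_x^{q_n})=v_n(\fra_i)$. Applying the hypothesis at the valuation $v_n$ and the exponent $q_n$ gives
$$\sum_i\lambda_i\bigl(n\,\ord_E(\fra_i)+\ord_{E'}(\fra_i)\bigr)\leq n(\kappa_E+1)+(\kappa_{E'}+1);$$
dividing by $n$ and letting $n\to\infty$ produces $\sum_i\lambda_i\ord_E(\fra_i)\leq\kappa_E+1$, completing Case B. The main obstacle is exactly this approximation step: although $\ord_E$ itself becomes invisible after passing from $\fra_i$ to $\fra_i+\frm_x^q$, the constraint it imposes on $\LCT_x(\fra_1,\ldots,\fra_r)$ is recovered in the limit by divisorial valuations centered at $x$ obtained by tilting $\ord_E$ toward a companion SNC component that does detect $\frm_x$.
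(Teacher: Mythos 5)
Your proof is correct, but it takes a genuinely different route from the paper's. The paper argues quantitatively: it reduces to rational $\lambda$ (using that LCT-polytopes are closed and monotone under $\preceq$), rescales by an integer $N$ with $N\lambda_i\in\ZZ$, notes that $\fra_1^{N\lambda_1}\cdots\fra_r^{N\lambda_r}$ and $(\fra_1+\frm_x^q)^{N\lambda_1}\cdots(\fra_r+\frm_x^q)^{N\lambda_r}$ agree modulo $\frm_x^{qN\tau}$ with $\tau=\min\{\lambda_i\mid \lambda_i>0\}$, and then invokes the $\frm$-adic approximation estimate for log canonical thresholds (\cite[Corollary~2.10]{dFM}) to get $\lct_x(\fra_1^{\lambda_1}\cdots\fra_r^{\lambda_r})\geq 1-\tfrac{n}{q\tau}$, letting $q\to\infty$. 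Your argument instead works with the valuative criterion and handles the only problematic valuations --- those with $v(\frm_x)=0$ but $x$ in the closure of the center, for which the hypothesis is vacuous --- by tilting $\ord_E$ along a companion exceptional component $E'$ with $\ord_{E'}(\frm_x)>0$ into a family of quasi-monomial (divisorial) valuations $v_n$ centered at $x$, and recovering the inequality for $E$ in the limit $n\to\infty$; all the computations ($v_n(\fra_i)=n\,\ord_E(\fra_i)+\ord_{E'}(\fra_i)$, $A_X(v_n)=n(\kappa_E+1)+(\kappa_{E'}+1)$, $\pi(E')=\{x\}$, hence $c_X(v_n)=\{x\}$) check out, and the two standard facts you use implicitly (that $\ord_E$ can be realized on a log resolution of $\fra_1\cdots\fra_r\cdot\frm_x$, and the characterization of ``log canonical at $x$'' via all divisors whose center closure contains $x$) are harmless, with resolutions in the formal setting supplied by \cite{Temkin} exactly as in the paper. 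The trade-off: the paper's proof is shorter and effective --- the explicit $n/(q\tau)$-type bound is precisely what feeds into Proposition~\ref{prop2} and Corollaries~\ref{cor1}, \ref{cor2} (the Hausdorff-distance estimate $n\sqrt{r}/N$) --- whereas your proof is purely qualitative but more self-contained (no appeal to the \cite{dFM} estimate), treats irrational $\lambda$ directly without the rational-approximation and rescaling step, and makes transparent exactly which divisorial constraints are lost under $\frm_x$-adic truncation and how they are recovered.
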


\begin{proof}
The inclusion ``$\subseteq$" is trivial, so let us
suppose that $\lambda=(\lambda_i)$ lies in the above intersection. It is enough to show that
every $\lambda'\in\QQ_+^r$ with $\lambda'\preceq\lambda$ lies in 
$\LCT_x(\fra_1,\ldots,\fra_r)$. Therefore, we may assume that $\lambda\in\QQ_+^r$. 
Choose $N$ such that all $N\lambda_i$ are integers. By assumption, we have 
$\lct((\fra_1+\frm_x^q)^{N\lambda_1}\cdots(\fra_r+\frm_x^q)^{N\lambda_r})\geq 1/N$. 

Let $\tau:=\min\{\lambda_i\mid\lambda_i>0\}$.
Since
the ideals $\fra_1^{N\lambda_1}\cdots\fra_r^{N\lambda_r}$ and
$(\fra_1+\frm_x^q)^{N\lambda_1}\cdots(\fra_r+\frm_x^q)^{N\lambda_r}$ are congruent modulo
$\frm_x^{qN\tau}$, it follows that
$$\lct_x((\fra_1+\frm_x^q)^{N\lambda_1}\cdots(\fra_r+\frm_x^q)^{N\lambda_r})
-\lct_x(\fra_1^{N\lambda_1}\cdots\fra_r^{N\lambda_r})\leq \frac{n}{qN\tau},$$
where $n=\dim(\cO_{X,x})$ (see \cite[Corollary~2.10]{dFM}). We conclude that 
$\lct_x(\fra_1^{\lambda_1}\cdots\fra_r^{\lambda_r})\geq 1-\frac{n}{q\tau}$.
Letting $q$ go to infinity, this gives $\lambda\in\LCT_x(\fra_1\ldots,\fra_r)$.
\end{proof}

The above lemma and the previous remark can be used 
 to reduce proving results about LCT-polytopes on
$\Spec(k\llbracket x_1,\ldots,x_n\rrbracket)$ to proving the similar results on $\AAA^n$.
In order to illustrate this, we give the following

\begin{proposition}\label{inversion}
 If $H\subset X$ is a smooth hypersurface containing $x$,
 and if $\fra_i$ are ideals on $X$
such that all $\fra_i\cO_H$ are nonzero,  then
$$\LCT_x(\fra_1\cO_H,\ldots,\fra_r\cO_H)\subseteq\LCT_x(\fra_1,\ldots,\fra_r).$$
\end{proposition}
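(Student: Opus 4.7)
The strategy is to reduce the statement to the classical inversion of adjunction for the log canonical threshold of a single ideal. First, note that both $\LCT_x(\fra_1\cO_H,\ldots,\fra_r\cO_H)$ and $\LCT_x(\fra_1,\ldots,\fra_r)$ are rational polytopes in $\RR_+^r$, hence closed sets. Since rational points are dense in the left-hand polytope, it is enough to check the inclusion on those $\lambda\in\QQ_+^r$ which belong to the left-hand side.

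Given such a rational point $\lambda=(a_1/N,\ldots,a_r/N)$ with $a_i,N\in\NN$, the plan is to bundle the $\fra_i$ into a single ideal. Put
$$\fra:=\fra_1^{a_1}\cdots\fra_r^{a_r}\subset\cO_X.$$
Since restriction to $H$ commutes with products of ideals, we have $\fra\cO_H=(\fra_1\cO_H)^{a_1}\cdots(\fra_r\cO_H)^{a_r}$. The assumption that $\lambda\in\LCT_x(\fra_1\cO_H,\ldots,\fra_r\cO_H)$ is now equivalent, by Proposition~\ref{prop1}~i), to the pair $(H,(\fra\cO_H)^{1/N})$ being log canonical at $x$, i.e.\ to $\lct_x(\fra\cO_H)\geq 1/N$.

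At this point I invoke the standard inversion of adjunction along a smooth hypersurface for the log canonical threshold of a single ideal (Kollár \cite{Kol}, \S 8 in the algebraic case, and its analogue in \cite{dFM} for the formal power series case), which gives
$$\lct_x(\fra)\geq \lct_x(\fra\cO_H)\geq 1/N.$$
This says precisely that $(X,\fra^{1/N})$ is log canonical at $x$, which, once again by Proposition~\ref{prop1}~i), translates to $\lambda\in\LCT_x(\fra_1,\ldots,\fra_r)$, as desired.

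The only potential obstacle is the passage from rational to arbitrary points of the left-hand polytope, but this is handled automatically by the closedness of the right-hand polytope in $\RR_+^r$. Everything else is formal: commutation of restriction with products of ideals, and a clean repackaging of the classical single-ideal statement via the scaling property of LCT-polytopes.
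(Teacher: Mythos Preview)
Your proof is correct and rests on the same underlying tool as the paper's---Inversion of Adjunction---but the organization is different. The paper applies Inversion of Adjunction directly to the formal pair $(X,\fra_1^{\lambda_1}\cdots\fra_r^{\lambda_r})$ in the variety case (citing \cite[Theorem~7.5]{Kol}), and then handles the formal power series ring by truncating each $\fra_i$ modulo $\frm_x^q$ (via Lemma~\ref{intersection_LCT} and Remark~\ref{completion}) to reduce to ideals in $\AAA^n$, hence to the variety case already treated. You instead first reduce the multi-ideal statement to the single-ideal inequality $\lct_x(\fra)\geq\lct_x(\fra\cO_H)$ by passing to rational $\lambda$ and bundling into $\fra=\prod_i\fra_i^{a_i}$, and then invoke the single-ideal version of inversion of adjunction in both the variety and formal settings. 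Your route is slightly more elementary in that it only needs the classical lct inequality rather than the full pair statement; the paper's route avoids the rational-approximation step and is explicit about why the formal power series case follows.

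One small inaccuracy: the equivalence you attribute to Proposition~\ref{prop1}~i)---that $(H,(\fra_1\cO_H)^{a_1/N}\cdots(\fra_r\cO_H)^{a_r/N})$ being log canonical at $x$ is the same as $\lct_x(\fra\cO_H)\geq 1/N$---does not follow from that proposition, which is about rescaling the individual exponents rather than bundling several ideals into one. The equivalence you need is simply immediate from the definition of log canonicity for a formal product $\prod_i\frb_i^{\lambda_i}$ with rational exponents (checked, e.g., on a common log resolution), so no citation is required there.
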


\begin{proof}
When $X$ is a nonsingular variety over $k$, this follows easily from Inversion of Adjunction
(see \cite[Theorem~7.5]{Kol}). If $X=\Spec(k\llbracket x_1,\ldots,x_n\rrbracket)$, 
after a change of coordinates we may assume that $H=(x_1=0)$. In this case,
by Lemma~\ref{intersection_LCT} it is enough to prove the proposition when we replace
$\fra_i$ by $\fra_i+\frm_x^q$. Since there are ideals $\fra'_i$ in $k[x_1,\ldots,x_n]$ such that
$\fra_i+\frm_x^q=\fra'_i\cdot k\llbracket x_1,\ldots,x_n\rrbracket$, we conclude using the case
of ideals in $\AAA^n$ via Remark~\ref{completion}.
\end{proof}

\begin{remark}\label{rem_mixed_multiplier}
If $X$ is a nonsingular variety over $k$, it is sometimes convenient to phrase
the description of $\LCT_x(\fra_1,\ldots,\fra_r)$ in the language of mixed multiplier ideals,
for which we refer to \cite[Chapter~9]{Lazarsfeld}. Recall that the pair
$(X,\fra_1^{\lambda_1}\cdots\fra_r^{\lambda_r})$ is klt at $x\in X$ if and only if the mixed multiplier ideal $\cJ(X,\fra_1^{\lambda_1}\cdots\fra_r^{\lambda_r})$ is not contained in the ideal
$\frm_x$ defining $x$. We deduce using the definition of the LCT-polytopes that 
$\lambda\in\LCT_x(\fra_1,\ldots,\fra_r)$ if and only if for every $\mu=(\mu_i)\in\RR_+^r$ with
$\mu\prec\lambda$, we have $\cJ(X,\fra_1^{\mu_1}\cdots\fra_r^{\mu_r})
\not\subseteq\frm_x$.
\end{remark}

The following proposition is the generalization to the case $r>1$
of \cite[Proposition~8.19]{Kol}. As above, we denote by 
$\frm_x$ the ideal defining the closed point $x\in X$.

\begin{proposition}\label{prop2}
Let $\frb,\fra_1,\ldots,\fra_r$ be nonzero ideals on $X$, with $\dim(X)=n$. 
If $\lambda=(\lambda_j)\in \LCT_x(\fra_1,\ldots,\fra_r)$, and $N$ 
is a positive integer such that
$\fra_i+\frm_x^N=\frb+\frm_x^N$ for some $i$, then
$$\lambda-\min\{n/N,\lambda_i\}e_i \in\LCT_x(\fra_1,\ldots,\frb,\ldots,\fra_r),$$
where $\frb$ appears on the $i^{\rm th}$ component.
\end{proposition}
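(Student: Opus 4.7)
The plan is to split according to the size of $\lambda_i$. If $\lambda_i\le n/N$, one needs $(\lambda_1,\dots,0,\dots,\lambda_r)\in \LCT_x(\fra_1,\dots,\frb,\dots,\fra_r)$, i.e.\ that $(X,\prod_{j\ne i}\fra_j^{\lambda_j})$ is log canonical at $x$, which is Proposition~\ref{prop1}(v) applied to the original pair (lowering the $i$-th coordinate of $\lambda$ to $0$).

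For the main case $\lambda_i>n/N$, set $\lambda':=\lambda-(n/N)e_i$. By Remark~\ref{rem_mixed_multiplier}, it suffices to prove that $\mathcal{J}\bigl(\prod_{j\ne i}\fra_j^{\mu'_j}\cdot\frb^{\mu'_i}\bigr)\not\subseteq\frm_x$ for every rational $\mu'\prec\lambda'$. Given such $\mu'$, set $\mu:=\mu'+(n/N)e_i$, so that $\mu\prec\lambda$. Since $\fra_i\subseteq\fra_i+\frm_x^N=\frb+\frm_x^N$, Proposition~\ref{prop1}(ii) upgrades the hypothesis to $\lambda\in\LCT_x(\fra_1,\dots,\frb+\frm_x^N,\dots,\fra_r)$, and Remark~\ref{rem_mixed_multiplier} at $\mu$ then yields
\[
\mathcal{J}\Bigl(\prod_{j\ne i}\fra_j^{\mu_j}\cdot(\frb+\frm_x^N)^{\mu_i}\Bigr)\not\subseteq\frm_x.
\]

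The central tool I would now invoke is Musta\c{t}\u{a}'s summation formula for multiplier ideals, which expresses the above ideal as $\sum_{a+b=\mu_i,\;a,b\in\QQ_{\ge 0}}\mathcal{J}\bigl(\prod_{j\ne i}\fra_j^{\mu_j}\cdot\frb^a\cdot\frm_x^{Nb}\bigr)$. Some summand must therefore escape $\frm_x$, and since that summand is contained in $\mathcal{J}(\frm_x^{Nb})$, we get $\mathcal{J}(\frm_x^{Nb})\not\subseteq\frm_x$, equivalent to $Nb<n$ (as $\lct_x(\frm_x)=n$). Hence $a=\mu_i-b>\mu'_i$, and monotonicity of mixed multiplier ideals in exponents and in ideal factors gives
\[
\mathcal{J}\Bigl(\prod_{j\ne i}\fra_j^{\mu'_j}\cdot\frb^{\mu'_i}\Bigr)\;\supseteq\;\mathcal{J}\Bigl(\prod_{j\ne i}\fra_j^{\mu_j}\cdot\frb^a\cdot\frm_x^{Nb}\Bigr)\;\not\subseteq\;\frm_x,
\]
as required. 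The main obstacle is isolating the correct summand in Musta\c{t}\u{a}'s formula; the resulting bound $b<n/N$ is precisely what produces the $n/N$-loss in the $i$-th coordinate predicted by the statement, via the numerical coincidence $\lct_x(\frm_x^N)=n/N$. In the formal power series setting the same argument applies using the resolutions of~\cite{Temkin}, or one may reduce to $\AAA^n$ using Remark~\ref{completion} together with Lemma~\ref{intersection_LCT}.
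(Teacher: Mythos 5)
Your proposal is correct and follows essentially the same route as the paper's proof: reduce to the smooth variety case, use the mixed multiplier ideal criterion of Remark~\ref{rem_mixed_multiplier}, apply the Summation Theorem to $(\frb+\frm_x^N)^{\mu_i}$, and extract a summand not contained in $\frm_x$, with $\cJ(\frm_x^{n})\subseteq\frm_x$ forcing $Nb<n$ and hence the loss of exactly $n/N$ in the $i$-th coordinate. The only (harmless) inaccuracies are that the summation formula runs over all real decompositions $a+b=\mu_i$ rather than just rational ones (nothing in your argument uses rationality of $a,b$), and that the formal power series case is best handled, as you note in your second alternative, by the reduction via Lemma~\ref{intersection_LCT} and Remark~\ref{completion} rather than by invoking resolutions directly, since the Summation Theorem is cited in the algebraic setting.
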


\begin{proof}
By Lemma~\ref{lem2}, we may assume that all $\fra_i$ vanish at $x$.
After replacing $\fra_i$ by $\fra_i+\frm_x^N$, we may also assume that $\fra_i=\frb+\frm_x^N$.
Arguing as in the proof of  Proposition~\ref{inversion}, we see that it is enough to prove
the statement when $X$ is a smooth variety over $k$.
In this case it is convenient to use
the language of mixed multiplier ideals, see Remark~\ref{rem_mixed_multiplier}.
Let us consider any $\mu=(\mu_j)\in\RR_+^r$, with $\mu\prec\lambda$, so by assumption
  the mixed multiplier ideal
$\cJ(X, \fra_1^{\mu_1}\cdots\fra_r^{\mu_r})$ is not contained in $\frm_x$.

By the Summation Theorem 
(for the version that we need, see \cite[Corollary~4.2]{JM}) we have
$$\cJ(X,\fra_1^{\mu_1}\cdots (\frb+\frm_x^N)^{\mu_i}\cdots\fra_r^{\mu_r})=
\sum_{\alpha+\beta=\mu_i}\cJ(X,\fra_1^{\mu_1}\cdots
\frb^{\alpha}\frm_x^{N\beta}\cdots\fra_r^{\mu_r}).$$
It follows that for some $\alpha$, $\beta\geq 0$ with $\alpha+\beta=\mu_i$ we have
$$\cJ(X,\fra_1^{\mu_1}\cdots
\frb^{\alpha}\frm_x^{N\beta}\cdots\fra_r^{\mu_r})\not\subseteq\frm_x.$$
If $\mu_i>\frac{n}{N}$,  then using
$\cJ(\frm_x^n)\subseteq\frm_x$ we deduce 
$N\beta<n$, and therefore
$$\left(\mu_1,\ldots,\mu_i-\frac{n}{N},\ldots,\mu_r\right)\in\LCT_x(\fra_1,\ldots,\frb,\ldots,\fra_r).$$
We conclude that $\mu-\min\{n/N,\mu_i\}e_i\in \LCT_x(\fra_1,\ldots,\frb,\ldots,\fra_r)$
(note that by hypothesis $(\mu_1,\ldots,0,\ldots,\mu_r)\in \LCT_x(\fra_1,\ldots,\fra_i,\ldots\fra_r)$,
which is equivalent to $(\mu_1,\ldots,0,\ldots,\mu_r)\in \LCT_x(\fra_1,\ldots,\frb,\ldots\fra_r)$).
Since this holds for every $\mu\prec\lambda$, we get the conclusion of the proposition.
\end{proof}

An iterated application of the proposition gives the following result
improving Lemma~\ref{intersection_LCT}.

\begin{corollary}\label{cor1}
Let $\fra_i$, $\frb_i$ be ideals on $X$, for $1\leq i\leq r$, and let $N$ be a positive integer such that
$\fra_i+\frm_x^N=\frb_i+\frm_x^N$ for all $i$. If $\lambda=(\lambda_i)\in\LCT_x(\fra_1,\ldots,\fra_r)$, then
$\lambda'=(\lambda'_i)\in\LCT_x(\frb_1,\ldots,\frb_r)$, where $\lambda'_i=\max\left\{\lambda_i-\frac{n}{N},
0\right\}$ for all $i$.
\end{corollary}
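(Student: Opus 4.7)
The proof is a straightforward iteration of Proposition~\ref{prop2}, applying it once for each index $i=1,\ldots,r$ to replace $\fra_i$ by $\frb_i$ one at a time.

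The plan is as follows. Set $\lambda^{(0)}:=\lambda$, so that $\lambda^{(0)}\in\LCT_x(\fra_1,\ldots,\fra_r)$. Inductively, suppose that after $i-1$ steps we have produced a vector $\lambda^{(i-1)}\in\LCT_x(\frb_1,\ldots,\frb_{i-1},\fra_i,\ldots,\fra_r)$, whose $j$-th coordinate is $\max\{\lambda_j-n/N,0\}$ for $j<i$ and $\lambda_j$ for $j\geq i$. Since by hypothesis $\fra_i+\frm_x^N=\frb_i+\frm_x^N$, Proposition~\ref{prop2} applied in the $i$-th position (with $\frb:=\frb_i$) to the tuple $(\frb_1,\ldots,\frb_{i-1},\fra_i,\ldots,\fra_r)$ yields
$$\lambda^{(i)}:=\lambda^{(i-1)}-\min\bigl\{n/N,\lambda_i\bigr\}e_i\in\LCT_x(\frb_1,\ldots,\frb_i,\fra_{i+1},\ldots,\fra_r).$$
The $i$-th coordinate of $\lambda^{(i-1)}$ is still $\lambda_i$ (it has not been touched in the previous steps, since those only modified coordinates $j<i$), so the new $i$-th coordinate of $\lambda^{(i)}$ is $\lambda_i-\min\{n/N,\lambda_i\}=\max\{\lambda_i-n/N,0\}=\lambda'_i$, while the other coordinates remain unchanged.

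After $r$ steps we obtain $\lambda^{(r)}=\lambda'\in\LCT_x(\frb_1,\ldots,\frb_r)$, as required. There is no real obstacle here: the proof is entirely mechanical once Proposition~\ref{prop2} is available, since each application modifies only a single coordinate and the hypotheses propagate unchanged from one step to the next.
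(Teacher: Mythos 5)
Your proof is correct and is exactly the argument the paper intends: the paper states the corollary follows by ``an iterated application'' of Proposition~\ref{prop2}, and your step-by-step replacement of $\fra_i$ by $\frb_i$, subtracting $\min\{n/N,\lambda_i\}$ in the $i$-th coordinate at each stage, is precisely that iteration.
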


Recall that on the space ${\mathcal H}_r$ of all nonempty compact subsets in $\RR^r$ we have the Hausdorff metric, defined as follows. If $K\subset\RR^r$ is an arbitrary compact set, for every $x\in\RR^r$
we put $d(x,K)=\min_{y\in K}d(x,y)$, where $d(x,y)$ denotes the Euclidean distance between
$x$ and $y$. The Hausdorff distance between two compact sets $K_1$ and $K_2$ is defined by
$$\delta(K_1,K_2):=\max\{\max_{x\in K_1}d(x,K_2),\max_{x\in K_2}d(x,K_1)\}.$$
The set of all nonempty compact subsets of $\RR^r$ thus becomes 
a complete metric space. Furthermore, the subspace of $\cH_r$
consisting of all compact subsets of a fixed compact set 
$K$ in  $\RR^r$ is compact.
For some basic facts about the Hausdorff metric, see \cite[p.281]{Munkres}.
Using this notion, we deduce from Corollary~\ref{cor1} the next

\begin{corollary}\label{cor2}
Suppose that $\fra_i$, $\frb_i$ are ideals on $X$, and $x\in X$ lies in
$\bigcap_i{\rm Supp}(V(\fra_i))$. If $N$ is a positive integer such that
$\fra_i+\frm_x^N=\frb_i+\frm_x^N$ for all $i$, then
$$\delta(\LCT_x(\fra_1,\ldots,\fra_r),\LCT_x(\frb_1,\ldots,\frb_r))\leq\frac{n\sqrt{r}}{N}.$$
\end{corollary}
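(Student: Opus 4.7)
The plan is to observe that the hypothesis $\fra_i+\frm_x^N=\frb_i+\frm_x^N$ is symmetric in the two families of ideals, and that Corollary~\ref{cor1} already provides, in one direction, an explicit retraction of one LCT-polytope into the other with controlled displacement. Computing the Euclidean norm of that displacement, and applying the same procedure with the roles of $\fra_i$ and $\frb_i$ reversed, will give bounds on both one-sided distances entering the definition of $\delta$.

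First I would note that both LCT-polytopes are genuine compact polytopes, so that the Hausdorff distance is defined: since $\frm_x^N\subseteq\frm_x$, the equality $\fra_i+\frm_x^N=\frb_i+\frm_x^N$ forces $\fra_i\subseteq\frm_x$ if and only if $\frb_i\subseteq\frm_x$, so the hypothesis $x\in\bigcap_i\Supp(V(\fra_i))$ also gives $x\in\bigcap_i\Supp(V(\frb_i))$, and Lemma~\ref{lem2}(i) then ensures boundedness of both polytopes.

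Next, given $\lambda=(\lambda_i)\in\LCT_x(\fra_1,\ldots,\fra_r)$, Corollary~\ref{cor1} produces the point
$\lambda'=(\lambda'_i)\in\LCT_x(\frb_1,\ldots,\frb_r)$ with $\lambda'_i=\max\{\lambda_i-n/N,0\}$.
Since $0\leq \lambda_i-\lambda'_i=\min\{\lambda_i,n/N\}\leq n/N$ for every $i$, the Euclidean distance satisfies
$$d(\lambda,\lambda')=\sqrt{\sum_{i=1}^r(\lambda_i-\lambda'_i)^2}\leq\sqrt{r\cdot (n/N)^2}=\frac{n\sqrt{r}}{N}.$$
Consequently $\max_{\lambda\in\LCT_x(\fra_1,\ldots,\fra_r)}d(\lambda,\LCT_x(\frb_1,\ldots,\frb_r))\leq n\sqrt{r}/N$.

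Finally, the hypothesis is symmetric in $\fra_i$ and $\frb_i$, so applying Corollary~\ref{cor1} with the roles swapped bounds the other one-sided maximum by the same quantity. Taking the maximum of the two gives the desired estimate on $\delta$. There is no real obstacle here once one has Corollary~\ref{cor1}; the only routine care needed is checking that the $\ell^2$-norm of a vector in $[0,n/N]^r$ is at most $n\sqrt{r}/N$, and invoking the symmetry of the hypothesis to obtain both inequalities required by the Hausdorff distance.
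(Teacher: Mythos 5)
Your proof is correct and matches the paper's approach: the paper deduces Corollary~\ref{cor2} directly from Corollary~\ref{cor1} in exactly this way, bounding each coordinate displacement by $n/N$ and using the symmetry of the hypothesis to control both one-sided distances in the Hausdorff metric. Your explicit verification of the $\ell^2$-bound and of the fact that $x$ also lies in $\bigcap_i{\rm Supp}(V(\frb_i))$ is just the routine spelling-out the paper leaves implicit.
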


\begin{example}\label{ex_11}
Let $\fra_1,\ldots,\fra_r$ be proper nonzero ideals on $X=\Spec(k\llbracket x_1,\ldots,x_n\rrbracket)$.
If $\frb_1,\ldots,\frb_r$ are the inverse images of these ideals on 
$X'=\Spec(k\llbracket x_1,\ldots,x_n,y\rrbracket)$ via the canonical projection, then
$\LCT(\frb_1+(y^d),\frb_2,\ldots,\frb_r)$ is equal to
\begin{equation}\label{eq_ex_11}
\{(\lambda_1+t,\lambda_2,\ldots,\lambda_r)
\mid (\lambda_1,\ldots,\lambda_r)\in\LCT(\fra_1,\ldots,\fra_r), 0\leq t\leq 1/d\}.
\end{equation}
Indeed, note first that by Lemma~\ref{intersection_LCT} (or Corollary~\ref{cor1}), it is enough to prove the above assertion when we replace each $\fra_i$ by $\fra_i+(x_1,\ldots,x_n)^{\ell}$, for all
$\ell\geq 1$. It follows from Remark~\ref{completion} that it is enough to prove the similar equality
when the $\fra_i$ are nonzero ideals on $\Spec(k[x_1,\ldots,x_n])$ vanishing at the origin, we have
$\frb_i=\fra_i\cdot k[x_1,\ldots,x_n,y]$, and we compute the LCT-polytopes at the origin.
In this case it is again convenient to use the language of mixed multiplier ideal sheaves.
Recall that by Remark~\ref{rem_mixed_multiplier}, we have $\lambda\in 
\LCT_0(\fra_1,\ldots,\fra_r)$ if and only if 
for every 
$\mu=(\mu_i)\in\RR_+^r$ with $\mu\prec\lambda$, we have
$\cJ(\AAA^n,\fra_1^{\mu_1}\cdots\fra_r^{\mu_r})\not\subseteq(x_1,\ldots,x_n)$.
It follows from the Summation Theorem (see \cite[Corollary~4.2]{JM})
that for every $\mu_1,\ldots,\mu_r\in\RR_+$, we have
$$\cJ(\AAA^{n+1},(\frb_1+(y^d))^{\mu_1}\frb_2^{\mu_2}\cdots\frb_r^{\mu_r})=
\sum_{\alpha+\beta=\mu_1}\cJ(\AAA^{n+1}, \frb_1^{\alpha}y^{d\beta}\frb_2^{\mu_2}\cdots\frb_r^{\mu_r})$$
$$=\sum_{\alpha+\beta=\mu_1}(y^{\lfloor d\beta\rfloor})\cdot
\cJ(\AAA^n, \frb_1^{\alpha}\frb_2^{\mu_2}\cdots\frb_r^{\mu_r}),$$
where the second equality follows from \cite[Remark~9.5.23]{Lazarsfeld}.
Therefore, this ideal is not contained in $(x_1,\ldots,x_n,y)$ if and only 
there is $\beta\in\RR_+$ with $\beta_1<1/d$ such that
$\cJ(\AAA^n,\frb_1^{\mu_1-\beta}\frb_2^{\mu_2}\cdots\frb_r^{\mu_r})$
is not contained in $(x_1,\ldots,x_n)$.
The description in (\ref{eq_ex_11}) easily follows.
\end{example}

\section{Limits of LCT-polytopes}

Recall that by Remark~\ref{completion}, 
 in order to study the possible
LCT-polytopes in a given dimension $n$, we may restrict to the case when 
$X=\Spec(k\llbracket x_1,\ldots,x_n\rrbracket)$. Of course, in this case it is not necessary to
include the closed point in the notation.

\begin{remark}\label{rem_k_and_K}
Note that if $k\subset K$ is a field extension of algebraically closed fields, and if 
$\fra_1,\ldots,\fra_r$ are nonzero proper ideals in $k\llbracket x_1,\ldots,x_n\rrbracket$, and if
we put $\fra'_i=\fra_i\cdot K\llbracket x_1,\ldots,x_n\rrbracket$, then
$\LCT(\fra_1,\ldots,\fra_r)=\LCT(\fra'_1,\ldots,\fra'_r)$. Indeed, by
Lemma~\ref{intersection_LCT} it is enough to show that for all $N\geq 1$ we have
\begin{equation}\label{eq_k_and_K}
\LCT(\fra_1+\frm^N,\ldots,\fra_r+\frm^N)=\LCT(\fra'_1+(\frm')^N,\ldots,\fra'_r+(\frm')^N),
\end{equation}
where $\frm$ and $\frm'$ are the maximal ideals in $k\llbracket x_1,\ldots,x_n\rrbracket$
and respectively, $K\llbracket x_1,\ldots,x_n\rrbracket$. Let us fix $N$.
There are ideals $\frb_i$ in $k[x_1,\ldots,x_n]$ such that
$\frb_i\cdot k\llbracket x_1,\ldots,x_n\rrbracket=\fra_i+\frm^N$ for every $i$. 
If $\frb'_i=\frb_i\cdot K[x_1,\ldots,x_n]$, then $\frb'_i\cdot
K\llbracket x_1,\ldots,x_n\rrbracket=\fra'_i$. 
It is easy to see that $\LCT_0(\frb_1,\ldots,\frb_r)=\LCT_0(\frb'_1,\ldots\frb'_r)$,
using a log resolution of $\frb_1\cdot\ldots\cdot\frb_r$ to compute the left-hand side of the equality,
and the base-extension of this log resolution to $\Spec(K)$ to compute the right-hand side
(see for example \cite[Proposition~2.9]{dFM} for the case of one ideal). The assertion in
(\ref{eq_k_and_K}) is now a consequence of Remark~\ref{completion}.
Therefore every LCT-polytope of 
ideals in $k\llbracket x_1,\ldots,x_n\rrbracket$ is an LCT-polytope of ideals in
$K\llbracket x_1,\ldots,x_n\rrbracket$. 
\end{remark}

\begin{remark}\label{converse}
If $k$ is an algebraically closed field having infinite transcendence degree over $\QQ$
(for example, $k=\CC$), then every LCT-polytope of $r$ ideals in some 
$K\llbracket x_1,\ldots,x_n\rrbracket$, where $K$ is an algebraically closed field extension of 
$k$, can be realized as the
LCT-polytope of $r$ ideals in $k\llbracket x_1,\ldots,x_n\rrbracket$. Indeed, suppose that
$P=\LCT(\fra_1,\ldots,\fra_r)$, with $\fra_1,\ldots,\fra_r$ proper nonzero ideals in 
$K\llbracket x_1,\ldots,x_n\rrbracket$. Since each $\fra_i$ is finitely generated, we can find
an algebraically closed subfield $L\subset K$ of countable transcendence degree over $\QQ$,
and ideals $\frb_i$ in $L\llbracket x_1,\ldots,x_n\rrbracket$ such that 
$\fra_i=\frb_i\cdot K\llbracket x_1,\ldots,x_n\rrbracket$ for every $i$. Using the fact that
$k$ has infinite transcendence degree over $\QQ$, we can find an embedding
$L\hookrightarrow k$. If $\frb'_i=\frb_i\cdot k\llbracket x_1,\ldots,x_n\rrbracket$, we
deduce from the previous remark that $\LCT(\fra_1,\ldots,\fra_n)=\LCT(\frb'_1,\ldots,\frb'_n)$.
\end{remark}

By Proposition~\ref{prop1} iv),
all LCT-polytopes corresponding to $r$ proper nonzero ideals in 
$k\llbracket x_1,\ldots,x_n\rrbracket$
are contained in the compact set $[0,n]^r$.
Therefore every sequence of LCT-polytopes has a convergent
subsequence (in the Hausdorff metric). 
Our goal is to show 
that the limit is again an LCT-polytope, corresponding to possibly fewer than $r$ ideals.
Furthermore, we prove that in this case, the limit is 
equal to the intersection of all but finitely many of the given
LCT-polytopes.

\begin{theorem}\label{limit}
If $P_m=\LCT(\fra_1^{(m)},\ldots,\fra_r^{(m)})$ for $m\geq 1$, where the $\fra_i^{(m)}$ are proper nonzero ideals in $k\llbracket x_1,\ldots,x_n\rrbracket$, and if the $P_m$ converge
in the Hausdorff metric  to a compact
set $Q\subseteq\RR^r$, then $Q$ is again an LCT-polytope. More precisely, if 
$I$ is the set of those $i\leq r$ such that $Q\not\subseteq (x_i=0)$, then we can find proper
nonzero ideals
$\fra_1,\ldots,\fra_s$ in $K\llbracket x_1,\ldots,x_n\rrbracket$, with $s=\#I$ and $K$ an
algebraically closed field extension of $k$, such that
$Q=j_I(\LCT(\fra_1,\ldots,\fra_s))$, where $j_I\colon\RR^s\hookrightarrow\RR^r$ is the inclusion corresponding to the coordinates in $I$.
\end{theorem}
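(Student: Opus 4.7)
The plan is to realize $Q$ as an LCT-polytope by taking an ultralimit of the ideals $\fra_i^{(m)}$ at each truncation level $N$, and then to verify, via constructibility (Łoś's theorem), the approximation Corollary~\ref{cor2}, and the ACC theorem of \cite{dFEM}, that the resulting LCT-polytope in $K\llbracket x_1,\ldots,x_n\rrbracket$ coincides with $Q$.

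First I would reduce to the case $I=\{1,\ldots,r\}$, so that $s=r$. If $Q\subseteq(x_i=0)$ for some $i$, then Proposition~\ref{prop1} (parts iii) and iv) together yield $\max\{\lambda_i:\lambda\in P_m\}=\lct(\fra_i^{(m)})$), so $\lct(\fra_i^{(m)})\to 0$; the slices $P_m\cap(x_i=0)=\LCT(\fra_j^{(m)},\,j\neq i)\times\{0\}$ then also converge to $Q$ in the Hausdorff metric, and iterating drops the trivial coordinates. Next, for each $N\geq 1$, the truncated ideals $\fra_i^{(m)}+\frm^N$ correspond to $k$-points of a finite-type $k$-scheme $H_N$ parametrizing ideals of the Artinian ring $k\llbracket x_1,\ldots,x_n\rrbracket/\frm^N$. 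Fixing a non-principal ultrafilter $\mathcal{U}$ on $\NN$, set $K:=\prod_{\mathcal{U}}k$, which is algebraically closed and contains $k$. The $\mathcal{U}$-limit of the sequence of $r$-tuples of truncated ideals gives a $K$-point of $H_N^r$, i.e., ideals $\bar\fra_i^{(N)}\subset K\llbracket x_1,\ldots,x_n\rrbracket$ containing $\frm_K^N$. These are compatible as $N$ varies (truncation commutes with $\mathcal{U}$-limit), so one obtains ideals $\fra_i\subset K\llbracket x_1,\ldots,x_n\rrbracket$ with $\fra_i+\frm_K^N=\bar\fra_i^{(N)}$. The reduction to $I=\{1,\ldots,r\}$ gives bounded orders of vanishing $\ord(\fra_i^{(m)})\leq 1/c$, and taking $\mathcal{U}$-limits of witnessing elements shows each $\fra_i$ is nonzero and proper.

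It remains to show $\LCT(\fra_1,\ldots,\fra_r)=Q$. By Lemma~\ref{intersection_LCT}, the left side equals $\bigcap_N\LCT(\bar\fra_1^{(N)},\ldots,\bar\fra_r^{(N)})$, so it suffices to show that the $N$-th polytope converges to $Q$ as $N\to\infty$. For fixed $N$ and rational $\lambda\in\QQ_+^r$, the condition $\lambda\in\LCT$ cuts out a constructible, hence first-order definable, subset of $H_N^r$: a simultaneous log resolution over a constructible stratification of $H_N^r$ makes the defining inequalities (\ref{eq1}) locally constant. By Łoś's theorem, $\lambda\in\LCT(\bar\fra^{(N)})$ if and only if $\lambda\in\LCT(\fra^{(m)}+\frm^N)$ for $\mathcal{U}$-almost every $m$; combined with Corollary~\ref{cor2} (bounding the truncation error by $n\sqrt{r}/N$) and $P_m\to Q$, this forces $\LCT(\bar\fra^{(N)})$ to lie within $O(1/N)$ of $Q$ in Hausdorff distance. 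The main obstacle is to rule out ``inflation'' at the boundary --- that the intersection $\bigcap_N\LCT(\bar\fra^{(N)})$ strictly contains $Q$ --- and this is exactly where the ACC theorem of \cite{dFEM} enters: for each rational direction $v\in\QQ_+^r$, the quantity $\max\{t:tv\in P_m\}$ is the log canonical threshold of the explicit ideal $\prod_i(\fra_i^{(m)})^{v_i}$ (after clearing denominators), and therefore lies in the ACC set of log canonical thresholds in dimension $n$. Since these thresholds converge to $\max\{t:tv\in Q\}$, ACC forces the convergence to be eventually from above, ruling out strict containment in each rational direction, and density of rational directions together with convexity of $Q$ (a Hausdorff limit of convex sets) yields $\LCT(\fra_1,\ldots,\fra_r)=Q$ as desired.
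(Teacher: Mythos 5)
Your route is genuinely different from the paper's and, in its core, viable. The paper builds the limit ideals by the generic limit construction of Koll\'ar--de Fernex--Ein--Musta\c{t}\u{a} and then runs a two-inclusion argument: the key property is the limit formula (\ref{eq_limit}) along a subsequence, and, because that formula only gives a limit (possibly approached from below), the inclusion $j_I(\LCT(\fra_1,\ldots,\fra_s))\subseteq Q$ is extracted from it via the ACC theorem of \cite{dFEM} (this is Lemma~\ref{lem3_3}). You instead build the limit ideals as coefficientwise ultralimits over a non-principal ultrafilter, which is a legitimate repackaging of the generic limit (the compatibility of truncations, properness, and nonvanishing all go through once you have reduced to the case where all orders $\ord(\fra_i^{(m)})$ are bounded, which your reduction to $I=\{1,\ldots,r\}$ provides via Lemma~\ref{lem3_2}). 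The real gain of your packaging is that \L o\'s's theorem, applied to the constructible locus $\{\lambda\in\LCT\}$ in the parameter space $H_N^r$ (constructibility over $k$ does need the simultaneous-resolution stratification you cite, plus invariance of lct under extension of algebraically closed fields as in Remark~\ref{rem_k_and_K}), gives an \emph{exact} equivalence at each truncation level $N$: for fixed rational $\lambda$, $\lambda\in\LCT(\bar\fra^{(N)})$ iff $\lambda\in\LCT(\fra^{(m)}+\frm^N)$ for $\mathcal U$-almost every $m$. Combining this with Corollary~\ref{cor1}/\ref{cor2} (shift by $n/N$ to pass from the truncated polytope into $P_m$) and Lemma~\ref{lem3_1} ii)--iii) gives both inclusions: every rational point of $\LCT(\bar\fra^{(N)})$ is within $n\sqrt r/N$ of $Q$, and $Q\subseteq\LCT(\bar\fra^{(N)})$ for every $N$ (approximate $u\in Q$ by rational $\lambda\prec u$, use $P_m\to Q$ and the downward closedness of Proposition~\ref{prop1} v)). Intersecting over $N$ via Lemma~\ref{intersection_LCT} yields $Q=\LCT(\fra_1,\ldots,\fra_r)$; notably, in your scheme the ACC theorem is not needed at all for Theorem~\ref{limit} -- the $n/N$ truncation error plays the role that ACC plays in the paper.

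The defect is in your final paragraph, which you present as the crux. The ACC argument you give there does not address the issue you raise: for a rational direction $v$, the statement that the thresholds $\max\{t: tv\in P_m\}$ eventually dominate $\max\{t: tv\in Q\}$ is a statement comparing $Q$ with the polytopes $P_m$ (it is in fact the germ of Theorem~\ref{strong_ACC}), and by itself it says nothing about whether $\bigcap_N\LCT(\bar\fra^{(N)})$ can strictly contain $Q$; to rule out that ``inflation'' you must relate $\LCT(\bar\fra^{(N)})$ (equivalently $\LCT(\fra_1,\ldots,\fra_r)$) back to the $P_m$, and that is exactly what the \L o\'s equivalence plus Corollary~\ref{cor1} already does, as sketched above -- so the ACC step is both misdirected and unnecessary. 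In addition, the asserted convergence $\max\{t: tv\in P_m\}\to\max\{t: tv\in Q\}$ is not an automatic consequence of Hausdorff convergence; it requires a uniform lower bound (e.g.\ the fixed simplex inside all $P_m$ coming from Proposition~\ref{prop1} iv) together with the boundedness of the orders after your reduction) and an argument for boundary directions. So: replace the last paragraph by the direct argument that each rational $\lambda\in\LCT(\bar\fra^{(N)})$ satisfies $\lambda\in\LCT(\fra^{(m)}+\frm^N)$ for infinitely many $m$, hence $(\max\{\lambda_i-n/N,0\})_i\in P_m$ for those $m$ and therefore lies in $Q$ by Lemma~\ref{lem3_1}, and let $N\to\infty$; with that repair the proof is complete.
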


\begin{remark}
We make the convention that the LCT-polytope of an empty set of ideals consists of $\{0\}$.
In the context of Theorem~\ref{limit}, it can happen that $s=0$, in which case $Q$ consists
of the origin in $\RR^r$.
\end{remark}

\begin{remark}
It follows from Remark~\ref{converse} that if the transcendence degree of $k$ over $\QQ$
is infinite, then in Theorem~\ref{limit} we may take $K=k$.
\end{remark}

\begin{theorem}\label{strong_ACC}
If $(P_m)_{m\geq 1}$ and $Q$ are as in Theorem~\ref{limit}, then there is $m_0$ such that
$Q=\bigcap_{m\geq m_0}P_m$.
\end{theorem}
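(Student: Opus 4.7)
The theorem reduces to showing $Q \subseteq P_m$ for all $m \geq m_0$: the reverse inclusion $\bigcap_{m \geq m_0} P_m \subseteq Q$ is automatic, since any $y$ lying in every such $P_m$ satisfies $d(y, Q) \leq \delta(P_m, Q) \to 0$, and $Q$ is closed. By Theorem~\ref{limit}, $Q$ is a rational polytope, hence has finitely many vertices; the origin lies in every $P_m$, so it suffices to show that each nonzero vertex $v = (a_1/N, \ldots, a_r/N) \in Q$ (with $a_i \in \NN$ and $N \in \NN_{>0}$) lies in $P_m$ for $m \gg 0$. Convexity of $P_m$ then yields $Q \subseteq P_m$.

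Fix such a vertex $v$ and set $\cI_m := \prod_i (\fra_i^{(m)})^{a_i}$, a nonzero proper ideal in $k\llbracket x_1, \ldots, x_n\rrbracket$. By the definition of the LCT-polytope, $v \in P_m$ is equivalent to the single-ideal inequality $\lct(\cI_m) \geq 1/N$. The plan is to prove the radial estimate
\[
\lct(\cI_m) \geq \frac{1}{N} - \frac{\epsilon_m}{N v_{\min}}, \qquad v_{\min} := \min_{i : v_i > 0} v_i > 0,
\]
with $\epsilon_m \to 0$ (controlled by $\delta(P_m, Q)$), and then invoke the ACC theorem of \cite{dFEM} for log canonical thresholds in dimension $n$---available in the formal power series setting via Remark~\ref{completion}---to upgrade this to $\lct(\cI_m) \geq 1/N$ for $m \gg 0$. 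Indeed, if $\lct(\cI_m) < 1/N$ along an infinite subsequence, the radial estimate would force that subsequence to converge to $1/N$ strictly from below, producing an infinite strictly ascending chain of lcts in dimension $n$ and contradicting ACC.

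The radial estimate is obtained by combining Hausdorff convergence with the down-closedness of $P_m$ (Proposition~\ref{prop1}(v))---the main conceptual point. By Hausdorff convergence, choose $p_m \in P_m$ with $|p_m - v| < \epsilon_m$ and $\epsilon_m \to 0$. The coordinate-wise minimum $q_m := (\min(p_{m,i}, v_i))_i$ satisfies $q_m \preceq p_m$, so $q_m \in P_m$ by down-closedness; moreover $q_{m,i} \in (v_i - \epsilon_m, v_i]$ for each $i$. Setting $t_m := \min_{i : v_i > 0} q_{m,i}/v_i$ (well-defined since $v \neq 0$), we get $t_m v \preceq q_m$, so $t_m v \in P_m$ by another application of down-closedness; this is precisely the inequality $\lct(\cI_m) \geq t_m/N$, and the bound $t_m \geq 1 - \epsilon_m/v_{\min}$ is immediate from the estimates on $q_{m,i}$.

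The main obstacle is the bridge between the Hausdorff-geometric data (which yields only approximate membership of $v$ in $P_m$, up to an $O(\epsilon_m)$ perturbation) and the sharp log-canonicity condition $v \in P_m$. The down-closedness of LCT-polytopes makes the approximation step possible by turning a small Euclidean displacement near $v$ into a small multiplicative contraction $t_m v$ that still lies in $P_m$; the deep ACC of \cite{dFEM} then converts the resulting asymptotic inequality into the exact one we need.
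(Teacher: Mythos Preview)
Your proof is correct, and it takes a genuinely different route from the paper's. Both arguments reduce to showing that every rational vertex $v$ of $Q$ lies in $P_m$ for large $m$, and both ultimately rely on the ACC theorem of \cite{dFEM}. The difference is in how the vertex $v$ is shown to be ``asymptotically'' in the $P_m$. The paper invokes Lemma~\ref{lem3_3}, which uses the generic limit ideals $\fra_1,\ldots,\fra_s$ and the convergence~(\ref{eq_limit}) to place $1/N$ in the closure of the set $\{\lct(\cI_m)\}$; it then argues by contradiction, passing to nested subsequences so that all vertices of $Q$ land in every $P_m$. You instead bypass Lemma~\ref{lem3_3} and the generic limit construction altogether (using Theorem~\ref{limit} only as a black box for the rational polytope structure of $Q$): the Hausdorff estimate plus the down-closedness of Proposition~\ref{prop1}(v) give a point $t_m v\in P_m$ with $t_m\to 1$, which is exactly the statement that $1/N$ is a limit of the $\lct(\cI_m)$ from below, and ACC then forces $\lct(\cI_m)\geq 1/N$ for all but finitely many $m$ directly. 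Your approach is more self-contained and yields the ``cofinite'' conclusion without the contradiction/subsequence dance; the paper's route is shorter once Lemma~\ref{lem3_3} is in hand, and that lemma is already needed in the proof of Theorem~\ref{limit}, so from the paper's point of view there is no extra cost.
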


This result can be considered as a strong form of the Ascending Chain Condition for
LCT-polytopes. In fact, it immediately gives

\begin{corollary}\label{ACC}
If $P_m=\LCT(\fra_1^{(m)},\ldots,\fra_r^{(m)})$ for $m\geq 1$, where the $\fra_i^{(m)}$ are proper nonzero ideals in $k\llbracket x_1,\ldots,x_n\rrbracket$, and if $P_1\subseteq P_2\subseteq\cdots$,
then this sequence is eventually stationary.
\end{corollary}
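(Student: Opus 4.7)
The plan is to derive the corollary directly from Theorem~\ref{strong_ACC}, the key point being that for an ascending chain, Hausdorff convergence of a subsequence forces convergence of the whole sequence, and the intersection appearing in Theorem~\ref{strong_ACC} collapses to a single term.

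First I would observe that by Proposition~\ref{prop1} iii), each $P_m$ is contained in the compact cube $[0,n]^r$. Since $(P_m)$ is an ascending chain of compact subsets of $[0,n]^r$, the union $\bigcup_m P_m$ has a well-defined closure $Q := \overline{\bigcup_m P_m} \subseteq [0,n]^r$. A standard fact about the Hausdorff metric (on the space $\cH_r$ of nonempty compact subsets of $\RR^r$, as recalled just before Corollary~\ref{cor2}) is that a nested ascending sequence of nonempty compact sets converges in the Hausdorff metric to the closure of its union; this follows immediately from the definition of $\delta$, because $\max_{x \in P_m} d(x,Q) = 0$ and $\max_{x \in Q} d(x,P_m) \to 0$ as the $P_m$ exhaust a dense subset of $Q$. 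Hence $P_m \to Q$ in the Hausdorff metric, and in particular $P_m \subseteq Q$ for every $m$.

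Now I would apply Theorem~\ref{strong_ACC} to this convergent sequence: there exists an index $m_0$ such that
\[
Q \;=\; \bigcap_{m \geq m_0} P_m.
\]
Because the chain is ascending, the right-hand side equals $P_{m_0}$. Combining with $P_m \subseteq Q$ for all $m$, I obtain $P_m \subseteq P_{m_0}$ for every $m$, while the ascending hypothesis gives $P_{m_0} \subseteq P_m$ for every $m \geq m_0$. Therefore $P_m = P_{m_0}$ for all $m \geq m_0$, so the sequence is eventually stationary.

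There is no real obstacle here beyond invoking Theorem~\ref{strong_ACC}; the only verification needed is the elementary Hausdorff-convergence statement for ascending chains, which is immediate from the definition of $\delta$. The corollary is essentially just the observation that Theorem~\ref{strong_ACC}'s conclusion $Q = \bigcap_{m \geq m_0} P_m$ trivializes when the sequence is monotone.
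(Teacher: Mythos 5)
Your proof is correct and takes essentially the same route as the paper: both reduce the corollary to Theorem~\ref{strong_ACC} via Hausdorff convergence and the observation that the intersection $\bigcap_{m\geq m_0}P_m$ collapses to $P_{m_0}$ for an ascending chain. The only minor difference is that the paper passes to a convergent subsequence by compactness of $\cH_r$ (then notes a stationary subsequence of an ascending chain forces the whole chain to be stationary), while you directly verify that an ascending chain of compact sets converges in the Hausdorff metric to the closure of its union.
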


\begin{proof}
It is enough to find a subsequence that is eventually stationary.
Since $P_m\subseteq [0,n]^r$ for all $m$, we deduce that 
after passing to a subsequence, we may assume that the $P_m$ converge
to some $Q$ in the Hausdorff metric. Theorem~\ref{strong_ACC} implies that there is
$m_0$ such that $Q=\bigcap_{m\geq m_0}P_m$. On the other hand, it is easy to see
that in our case $\bigcup_{m\geq 1} P_m\subseteq Q$ (see, for example, Lemma~\ref{lem3_1} iii)
below). This gives $P_m=Q$ for every $m\geq m_0$. 
\end{proof}

For the proof of Theorems~\ref{limit} and \ref{strong_ACC} we will need a couple of lemmas.
The first one gives some easy properties of Hausdorff convergence that we will need.
We denote by $d(\cdot,\cdot)$ the Euclidean distance in $\RR^r$, and by $\delta(\cdot,\cdot)$
the Hausdorff metric on the space ${\mathcal H}_r$ of all nonempty compact 
subsets of $\RR^r$.
\begin{lemma}\label{lem3_1}
Let $(K_m)_{m\geq 1}$ be a sequence of compact subsets in $\RR^r$, converging in the Hausdorff metric
to the compact subset $K$.
\begin{enumerate}
\item[i)] If $C\subseteq\RR^r$ is closed, and $K_m\subseteq C$ for all $m$, then
$K\subseteq C$.
\item[ii)] If $u_m\in K_m$, and $(u_m)_{m\geq 1}$ converges to $u\in\RR^r$, then $u\in K$. 
\item[iii)] $\bigcap_mK_m\subseteq K$.
\end{enumerate}
\end{lemma}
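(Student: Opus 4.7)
The plan is to prove the three assertions in order, each following from elementary properties of the Hausdorff metric and compactness.

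For (i), I would take an arbitrary point $x\in K$ and produce a sequence in $C$ converging to it. Since $\delta(K_m,K)\to 0$, for each $m$ we have $d(x,K_m)\leq \delta(K,K_m)\to 0$, so we can pick $y_m\in K_m$ with $d(x,y_m)\to 0$. Because each $y_m$ lies in $C$ and $C$ is closed, the limit $x$ lies in $C$ as well.

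For (ii), the key observation is that convergence in the Hausdorff metric forces points of $K_m$ to cluster near $K$. Concretely, $d(u_m,K)\leq \delta(K_m,K)\to 0$, so I can choose $v_m\in K$ (which exists since $K$ is compact and hence the infimum defining $d(u_m,K)$ is attained) with $d(u_m,v_m)\to 0$. Combined with $u_m\to u$, the triangle inequality gives $v_m\to u$, and since $K$ is closed this yields $u\in K$.

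Part (iii) is then immediate from (ii): if $u\in\bigcap_m K_m$, apply (ii) to the constant sequence $u_m=u$ to conclude $u\in K$.

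None of the steps presents a real obstacle; the only thing to be careful about is making sure the point $v_m\in K$ with $d(u_m,v_m)=d(u_m,K)$ actually exists, which follows from compactness of $K$ (and compactness of $K$ is itself a consequence of Hausdorff convergence of compact sets, so in fact $K$ is automatically compact under the hypotheses).
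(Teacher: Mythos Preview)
Your proposal is correct and follows essentially the same approach as the paper: for (ii) both arguments pick points of $K$ close to $u_m$ using $d(u_m,K)\leq\delta(K_m,K)\to 0$ and then apply the triangle inequality, with the only stylistic difference that the paper phrases it as a contradiction while you argue directly; for (iii) both reduce to (ii) via the constant sequence, and for (i) the paper simply says it ``follows easily from definition,'' which your brief argument makes explicit.
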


\begin{proof}
The assertion in i) follows easily from definition.
For ii), note that if $u\not\in K$, then there is a ball $B(u,\epsilon)$ centered at $u$, and of radius 
$\epsilon>0$
that does not intersect $K$. By assumption, there is $m_0$ such that $\delta(K_m,K)<
\epsilon/2$ for all $m\geq m_0$. For such $m$, since $u_m\in K_m$, we have
$d(u_m,K)<\epsilon/2$, hence we can find $w_m\in K$ such that $d(u_m,w_m)
<\epsilon/2$. On the other hand, after possibly enlarging $m_0$, we may assume that 
 $d(u_m,u)<\epsilon/2$
for $m\geq m_0$. Therefore 
$$d(u,w_m)\leq d(u,u_m)+d(u_m,w_m)<\epsilon/2+\epsilon/2=\epsilon,$$
contradicting the fact that $B(u,\epsilon)\cap K=\emptyset$.
This proves ii), and the assertion in iii) is a special case.
\end{proof}

For a proper nonzero ideal $\fra$ in $k\llbracket x_1,\ldots,x_n\rrbracket$, its order
$\ord(\fra)$ is the largest nonnegative integer $d$ such that $\fra$ is contained
in the $d^{\rm th}$ power of the maximal ideal $\frm$.
Recall the following estimates for the log canonical threshold in terms of the order:
\begin{equation}\label{order}
\frac{1}{\ord(\fra)}\leq\lct(\fra)\leq\frac{n}{\ord(\fra)}
\end{equation}
(the first inequality reduces to
the case $n=1$ via Proposition~\ref{inversion}, while the second inequality
follows from $\lct(\fra)\leq\lct(\frm^{\ord(\fra)})=n/\ord(\fra)$).

\begin{lemma} \label{lem3_2}
With the notation in Theorem~\ref{limit}, the following are equivalent:
\begin{enumerate}
\item[i)] $Q\subseteq (x_i=0)$.
\item[ii)] $\lim_{m\to\infty}\ord(\fra_i^{(m)})=\infty$.
\item[iii)] The set $\{\ord(\fra_i^{(m)})\mid m\geq 1\}$ is unbounded.
\end{enumerate}
\end{lemma}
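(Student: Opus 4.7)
My plan is to prove the cyclic chain of implications (i)$\Rightarrow$(ii)$\Rightarrow$(iii)$\Rightarrow$(i). The implication (ii)$\Rightarrow$(iii) is immediate, so only the other two require genuine work, and both will reduce to combining Proposition~\ref{prop1} with the elementary properties of Hausdorff convergence recorded in Lemma~\ref{lem3_1} and with the order estimates in~(\ref{order}). The key observation is that the algebraic quantity $\ord(\fra_i^{(m)})$ controls, via (\ref{order}), the single-variable log canonical threshold $\lct(\fra_i^{(m)})$, which in turn bounds the $i$-th coordinate of every point of $P_m$ from above (Proposition~\ref{prop1}(iii)) while forcing the specific point $\lct(\fra_i^{(m)})\,e_i$ to lie in $P_m$ (Proposition~\ref{prop1}(iv)).

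For (i)$\Rightarrow$(ii), I argue by contrapositive. Suppose $\ord(\fra_i^{(m)})$ does not tend to $\infty$; then there is a subsequence $(m_k)$ along which $\ord(\fra_i^{(m_k)})\leq M$ for some fixed $M$. By the left inequality in~(\ref{order}), $\lct(\fra_i^{(m_k)})\geq 1/M$. Proposition~\ref{prop1}(iv) puts $\lct(\fra_i^{(m_k)})\,e_i\in P_{m_k}$, and since the $P_{m_k}$ are contained in $[0,n]^r$ this bounded sequence has a further convergent subsequence with positive limit $\ell\geq 1/M$. Lemma~\ref{lem3_1}(ii) then forces $\ell\,e_i\in Q$, contradicting $Q\subseteq(x_i=0)$.

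For (iii)$\Rightarrow$(i), I extract a subsequence $(m_k)$ with $\ord(\fra_i^{(m_k)})\to\infty$, so by the right inequality in~(\ref{order}), $\lct(\fra_i^{(m_k)})\to 0$. Fix any $u\in Q$. The Hausdorff convergence $P_{m_k}\to Q$ gives $d(u,P_{m_k})\to 0$, and since each $P_{m_k}$ is compact we may choose $u_{m_k}\in P_{m_k}$ with $u_{m_k}\to u$. By Proposition~\ref{prop1}(iii), $(u_{m_k})_i\leq\lct(\fra_i^{(m_k)})\to 0$, hence $u_i=0$. As this holds for every $u\in Q$, we obtain $Q\subseteq(x_i=0)$.

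The only real subtlety is being careful with the two sides of Hausdorff convergence: (i)$\Rightarrow$(ii) uses that a point sitting in $P_m$ with a positive accumulation value must survive in $Q$ (Lemma~\ref{lem3_1}(ii)), while (iii)$\Rightarrow$(i) uses the reverse direction, namely that every point of $Q$ is approximated by points of the $P_m$, so a uniform bound on the $i$-th coordinate of $P_m$ descends to $Q$. Once these are distinguished, the estimates (\ref{order}) provide the clean translation between the algebraic invariant $\ord(\fra_i^{(m)})$ and the coordinates of points of $P_m$ that is needed at both ends.
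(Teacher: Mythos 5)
Your proof is correct and uses essentially the same ingredients and structure as the paper's: the membership $\lct(\fra_i^{(m)})e_i\in P_m$ together with Lemma~\ref{lem3_1}~ii) and the first inequality of (\ref{order}) for (i)$\Rightarrow$(ii), and the bound on the $i$-th coordinate of points of $P_m$ by $\lct(\fra_i^{(m)})\leq n/\ord(\fra_i^{(m)})$ combined with Hausdorff approximation of points of $Q$ for (iii)$\Rightarrow$(i); the only difference is cosmetic (contrapositive/subsequence phrasing versus the paper's direct and by-contradiction arguments).
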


\begin{proof}
Suppose first that $Q\subseteq (x_i=0)$. For every $m$ we have 
$\lct(\fra_i^{(m)})\cdot e_i\in P_m$, where $e_1,\ldots,e_r$ is the standard
basis of $\RR^r$. It follows from Lemma~\ref{lem3_1} ii)
that every limit point of the sequence $\left(\lct(\fra_i^{(m)})\cdot e_i\right)_{m\geq 1}$
lies in $Q$. Therefore $\lim_{m\to\infty}\lct(\fra_i^{(m)})=0$, and ii) follows
from the first inequality in (\ref{order}).

Since the implication ii)$\Rightarrow$iii) is trivial, in order to finish the proof of the lemma
it is enough to prove iii)$\Rightarrow$i). Suppose that 
$\lambda=(\lambda_1,\ldots,\lambda_r)\in Q$, and $\lambda_i>0$.
We can find $m_0$ such that $\delta(P_m,Q)<\lambda_i/2$ for all $m\geq m_0$.
For every such $m$, we can find $w^{(m)}=(w^{(m)}_1,\ldots,w^{(m)}_r)\in P_m$ such that
$d(w^{(m)},\lambda)<\lambda_i/2$. In particular, $w^{(m)}_i>\lambda_i/2$.
Since $w^{(m)}\in P_m$, we see using the
second inequality in (\ref{order})
that for all $m\geq m_0$
$$\frac{\lambda_i}{2}<w^{(m)}_i\leq\lct(\fra_i^{(m)})\leq\frac{n}{\ord(\fra_i^{(m)})}.$$
This contradicts iii).
\end{proof}

The main ingredient in the proof of Theorems~\ref{limit} and \ref{strong_ACC} is the 
generic limit construction from \cite{Kol1} and 
\cite{dFEM}. Let $(\fra_1^{(m)})_m,\ldots,(\fra_r^{(m)})_m$ be sequences as in
Theorem~\ref{limit}.
In order to simplify the notation, let us relabel the sequences such that
the set $I$ in the theorem is equal to $\{1,\ldots,s\}$.
Associated to the $s$ sequences 
 $(\fra_i^{(m)})_{m\geq 1}$, with $1\leq i\leq s$, 
 we get $s$ \emph{generic limits} $\fra_1,\ldots,\fra_s$.
 These are ideals in 
 $K\llbracket x_1,\ldots,x_n\rrbracket$, where $K$ is a suitable 
 algebraically closed field extension of $k$.
It follows from Lemma~4.3 in \cite{dFEM} and the above Lemma~\ref{lem3_2} that 
all $\fra_i$ are nonzero. Furthermore, since every $\fra_i^{(m)}$ is contained in the
maximal ideal, the same holds for the ideals $\fra_i$. 
The fundamental property of the generic limit construction is that there is a strictly increasing sequence $(m_{\ell})_{\ell}$ such that for every nonnegative rational numbers
$w_1,\ldots,w_s$ we have
\begin{equation}\label{eq_limit}
\lim_{\ell\to\infty}\lct((\fra_1^{(m_{\ell})})^{w_1}\cdots(\fra_s^{(m_{\ell})})^{w_s})
=\lct(\fra_1^{w_1}\cdots\fra_s^{w_s})
\end{equation}
(see \cite[Corollary~4.5]{dFEM}).

\begin{remark}
The construction in \cite{dFEM} deals with only two sequences of ideals, but as pointed out
in \emph{loc. cit.}, everything generalizes in an obvious way to any finite number of sequences.
We also note that the field $K$ given in \emph{loc. cit.} is not algebraically closed, but since
we are only interested in (\ref{eq_limit}), we can simply extend the generic limit ideals to
an algebraic closure. The equation (\ref{eq_limit}) is stated in \emph{loc. cit.} only for
integers $w_1,\ldots,w_s$. On the other hand, if the $w_i$ are rational numbers, and if
$N$ is a positive integer such that all $Nw_i\in\ZZ$, the formula for $(Nw_1,\ldots,Nw_s)$
implies the one for $(w_1,\ldots,w_s)$ by rescaling.
\end{remark}

We isolate in the following lemma the key argument needed for the proofs
of Theorems~\ref{limit} and \ref{strong_ACC}. We use the notation in those theorems,
as well the notation for the generic limit ideals introduced above.

\begin{lemma}\label{lem3_3}
If $\lambda\in \LCT(\fra_1,\ldots,\fra_s)\cap\QQ^s$, then there are infinitely
many $m$ such that $j_I(\lambda)\in P_m$.
\end{lemma}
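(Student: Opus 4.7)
The plan is to combine the generic limit property \eqref{eq_limit} with the theorem of \cite{dFEM} on the ACC for log canonical thresholds in fixed dimension. Set
\[ c_\ell:=\lct\bigl((\fra_1^{(m_\ell)})^{\lambda_1}\cdots(\fra_s^{(m_\ell)})^{\lambda_s}\bigr),\qquad c:=\lct(\fra_1^{\lambda_1}\cdots\fra_s^{\lambda_s}). \]
The hypothesis $\lambda\in\LCT(\fra_1,\ldots,\fra_s)$ is precisely the inequality $c\geq 1$, and \eqref{eq_limit}, applied with $w_i=\lambda_i$, gives $c_\ell\to c$. The first step is to reduce the desired conclusion to showing that $c_\ell\geq 1$ for infinitely many $\ell$: once this is known, $(\lambda_1,\ldots,\lambda_s)\in\LCT(\fra_1^{(m_\ell)},\ldots,\fra_s^{(m_\ell)})$ for such $\ell$, and since the remaining $r-s$ coordinates of $j_I(\lambda)$ are zero and contribute the unit ideal to the pair, it follows at once that $j_I(\lambda)\in P_{m_\ell}$.

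The heart of the argument is to upgrade the mere convergence $c_\ell\to c\geq 1$ to the inequality $c_\ell\geq 1$ along a subsequence. Pure convergence is insufficient, since a priori the $c_\ell$ might approach $c$ strictly from below. To rule this out I plan to invoke ACC. Choose $N\in\ZZ_{>0}$ with $N\lambda_i\in\ZZ_{\geq 0}$ for every $i$; then
\[ \frc_\ell:=(\fra_1^{(m_\ell)})^{N\lambda_1}\cdots(\fra_s^{(m_\ell)})^{N\lambda_s} \]
is a genuine ideal in $k\llbracket x_1,\ldots,x_n\rrbracket$, and the rescaling identity $c_\ell=N\cdot\lct(\frc_\ell)$ holds. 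By the main theorem of \cite{dFEM}, the set of log canonical thresholds of proper nonzero ideals in an $n$-dimensional smooth ambient space admits no infinite strictly increasing sequence, and scaling by the fixed positive constant $N$ preserves this property; hence $\{c_\ell\}_{\ell}$ lies in an ACC subset of $\RR_{>0}$.

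I then apply the following elementary observation: if $S\subset\RR$ is an ACC set and a sequence in $S$ converges to $c$, then it takes values $\geq c$ infinitely often. Indeed, otherwise after discarding finitely many terms all $c_\ell<c$, and a standard diagonal extraction produces a strictly increasing subsequence in $S$ converging to $c$, contradicting ACC. Applied to our situation this yields $c_\ell\geq c\geq 1$ for infinitely many $\ell$, which was the reduced goal.

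The principal obstacle is the recognition that neither ingredient is enough on its own: the generic limit construction yields only a limit of thresholds with no eventual numerical inequality, while ACC is an a priori statement about the shape of the value set rather than about a particular sequence. The bridge between them is the integer rescaling by $N$, which transfers the $\RR$-coefficient threshold $c_\ell$ into the honest-ideal setting of \cite{dFEM}, whereupon ACC forces the limiting inequality $c\geq 1$ to be realized infinitely often along the sequence.
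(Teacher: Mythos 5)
Your proposal is correct and follows essentially the same route as the paper: rescale the rational exponents by $N$ so that the relevant thresholds become $\frac{1}{N}$ times log canonical thresholds of honest ideals in $k\llbracket x_1,\ldots,x_n\rrbracket$, invoke the ACC theorem of \cite{dFEM}, and combine it with the convergence \eqref{eq_limit} to conclude that the thresholds are $\geq 1$ for infinitely many terms, hence $j_I(\lambda)\in P_m$ for those $m$. The only cosmetic difference is that you phrase the final step as an explicit ``ACC set plus convergent sequence'' observation along the subsequence $(m_\ell)$, whereas the paper states it directly for the set $\Gamma$ of all threshold values; the content is identical.
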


\begin{proof}
Write $\lambda=(\lambda_1,\ldots,\lambda_s)$, hence by assumption
$\lct(\fra_1^{\lambda_1}\cdots\fra_s^{\lambda_s})\geq 1$. Fix a positive integer $N$
such that $N\lambda_i\in\ZZ$ for every $i$.
Consider the set
$$\Gamma:=\{\lct((\fra_1^{(m)})^{N\lambda_1}\cdots
(\fra_s^{(m)})^{N\lambda_s})\mid m\in \ZZ_{>0}\}.$$
Since the elements of $\Gamma$ are log canonical thresholds
of ideals on $\Spec(k\llbracket x_1,\ldots,x_n\rrbracket)$,
it follows from \cite[Theorem~5.1]{dFEM} that $\Gamma$ satisfies ACC, that is, it contains no infinite
strictly increasing sequences. On the other hand,  (\ref{eq_limit}) shows that 
$\frac{1}{N}\lct(\fra_1^{\lambda_1}\cdots\fra_s^{\lambda_s})$ lies in the closure of
$\Gamma$. We deduce that there are infinitely many $m$ such that
$$\lct((\fra_1^{(m)})^{N\lambda_1}\cdots
(\fra_s^{(m)})^{N\lambda_s})\geq \frac{1}{N}\lct(\fra_1^{\lambda_1}\cdots
\fra_s^{\lambda_s})\geq
\frac{1}{N}.$$
Therefore $j_I(\lambda)\in P_m$ for all such $m$.
\end{proof}

We can now give the proofs of our main results.

\begin{proof}[Proof of Theorem~\ref{limit}]
With the above notation, it is enough to show that we have
$Q=j_I(\LCT(\fra_1,\ldots,\fra_s))$
(of course, we may assume that $s\geq 1$, as otherwise there is nothing to prove). Note first that Lemma~\ref{lem3_3} gives the
inclusion $j_I(\LCT(\fra_1,\ldots,\fra_s))\subseteq Q$. Indeed, since 
$\LCT(\fra_1,\ldots,\fra_s)\cap\QQ^s$ is dense in $\LCT(\fra_1,\ldots,\fra_s)$, and $Q$ is closed,
it is enough to prove the inclusion $j_I(\LCT(\fra_1,\ldots,\fra_s)\cap\QQ^s)\subseteq Q$, and this follows from
the lemma (note that by Lemma~\ref{lem3_1} iii), the intersection of infinitely many of the 
$P_m$ is contained in $Q$).

We now prove the reverse inclusion: suppose that $u=(u_1,\ldots,u_r)\in Q$ (hence
$u_i=0$ for $i>s$), and let us show that
$(u_1,\ldots,u_s)\in \LCT(\fra_1,\ldots,\fra_s)$. 
Note first that by Lemma~\ref{lem3_1} i), we have $Q\subseteq\RR_+^r$.
Fix $\epsilon>0$, and let us choose
$w=(w_1,\ldots,w_s)\in \QQ_+^s$ such that $w_i\leq u_i$ for all $i$, with strict inequality if
$u_i>0$, and such that $(u_i-w_i)<\epsilon$ for all $i$. We will show that in this case
$\lct(\fra_1^{w_1}\cdots\fra_s^{w_s})\geq 1$. Since this holds for every $\epsilon>0$,
we get $\lct(\fra_1^{u_1}\cdots\fra_s^{u_s})\geq 1$, that is, $u\in
j_I(\LCT(\fra_1,\ldots,\fra_s))$.

Let $(m_{\ell})$ be a strictly increasing sequence such that (\ref{eq_limit}) holds.
We can choose 
$q$ such that for all $m\geq q$ we have
$\delta(P_m,Q)<\min\{u_i-w_i\mid u_i>0\}$. For every such $m$, let us choose $v_m\in P_m$
with $d(v_m,u)<\min\{u_i-w_i\mid u_i>0\}$. We may assume that $v_m\in\QQ^r$.
Since $v_m=(v_{m,1},\ldots,v_{m,r})\in P_m$, we have
$\lct((\fra_1^{(m)})^{v_{m,1}}\cdots(\fra_r^{(m)})^{v_{m,r}})\geq 1$.
On the other hand, by construction $w_i\leq v_{m,i}$ for every $i\leq s$, hence
$\lct((\fra_1^{(m)})^{w_1}\cdots(\fra_s^{(m)})^{w_s})\geq 1$ for all $m\geq q$.
Therefore (\ref{eq_limit}) implies 
$\lct(\fra_1^{w_1}\cdots\fra_s^{w_s})\geq 1$, completing the proof.
\end{proof}

\begin{proof}[Proof of Theorem~\ref{strong_ACC}]
It is enough to show that there is $m_0$ such that $Q\subseteq P_m$ for all
$m\geq m_0$. Indeed, in this case $Q\subseteq\bigcap_{m\geq m_0}P_m\subseteq Q$,
where the second inclusion follows from Lemma~\ref{lem3_1} iii).

Let us assume that this is not the case. After possibly replacing the sequence $(P_m)_{m\geq 1}$
by a subsequence, we may assume that $Q\not\subseteq P_m$ for any $m$. Note that by
Theorem~\ref{limit}, $Q$ is a rational polytope, so it is the convex hull of its vertices, which
lie in $\QQ^r$. Furthermore, by the above proof, each such vertex lies in
$j_I(P(\fra_1,\ldots,\fra_s))$; hence by Lemma~\ref{lem3_3}, it lies in infinitely many
$P_m$. After replacing the sequence $(P_m)_{m\geq 1}$ by a subsequence, and after doing this 
for all vertices of $Q$, we conclude that all vertices of $Q$ lie in $P_m$ for all $m$. Therefore
$Q\subseteq P_m$ for all $m$, a contradiction. This concludes the proof of the theorem.
\end{proof}

\begin{example}
It follows from Example~\ref{ex_11} that if $\fra_1,\ldots,\fra_r$ are proper
nonzero ideals in $k\llbracket x_1,\ldots,x_n\rrbracket$, then
$\LCT(\fra_1,\ldots,\fra_r)$ is the intersection of a sequence 
$P_1\supset P_2\supset\ldots$ that is \emph{not} eventually stationary, 
where each $P_i$ is the $\LCT$-polytope of $r$ proper nonzero ideals in
$k\llbracket x_1,\ldots,x_n,y\rrbracket$.
\end{example}

\bigskip

\begin{remark}
If in Theorem~\ref{limit} we have $P_m=\LCT(f^{(m)}_1,\ldots,f^{(m)}_r)$ with the 
$f_i^{(m)}$ nonzero elements in the maximal ideal of $k\llbracket x_1,\ldots,x_n\rrbracket$,
then one can obtain $Q$ as (the linear embedding of) $\LCT(f_1,\ldots,f_s)$,
with $f_i$ nonzero  elements in the maximal ideal of some $K\llbracket x_1,\ldots,x_n\rrbracket$.
Indeed, one can modify the construction in \cite{dFEM} by replacing the Hilbert schemes
parametrizing all ideals in quotient rings $k[x_1,\ldots,x_n]/(x_1,\ldots,x_n)^d$ with parameter spaces for principal ideals in these rings (when $r=1$, this is done in 
\cite{Kol1}).
\end{remark}

Since the set of all log canonical thresholds $\lct(f)$, with $f\in k\llbracket x_1,\ldots,x_n\rrbracket$
satisfies ACC, it follows that there is a largest such invariant that is $<1$. Finding this value
for arbitrary $n$ is an open problem. For example, it is well-known that this value is equal to
$\frac{5}{6}$ if $n=2$. Indeed, if $f\in k\llbracket x,y\rrbracket$ has order $\geq 3$, then
we have
$\lct(f)\leq\frac{2}{3}$ by (\ref{order}). On the other hand, if the multiplicity
of $f$ at $0$ is two, then $f$ is formally equivalent to $x^2+y^m$, for some $m\geq 2$, and 
$\lct_0(x^2+y^m)=\frac{1}{2}+\frac{1}{m}$ (see \cite[\S 9.3.C]{Lazarsfeld}).
As the following example shows,
one can get similar results for $r\geq 2$. 

\begin{example}
We know that if $f,g\in k\llbracket x,y\rrbracket$ are nonzero elements in the maximal ideal of
$k\llbracket x,y\rrbracket$, then $\LCT(f,g)\subseteq [0,1]^2$. In fact, we have
$\LCT(f,g)=[0,1]^2$ if and only if after a change of variables $(f,g)=(x,y)$, and otherwise
$$\LCT(f,g)\subseteq \{(\lambda_1,\lambda_2)\in [0,1]^2\mid 
\lambda_1+\lambda_2\leq 3/2\}.$$ 
Indeed, it follows from 
Example~\ref{2_0} that $\LCT(x,y)=[0,1]^2$. If there is no change of variable such that
$(f,g)=(x,y)$, then there is a line in the tangent space at the origin to 
$X=\Spec(k\llbracket x,y\rrbracket)$ 
that is contained in $T_0(V(f))\cap T_0(V(g))$. This corresponds to a point $p$ on the exceptional divisor $E$ in the blow-up $B={\rm Bl}_0(X)\overset{\pi}\to X$, and the condition says that 
$\ord_p(\pi^*(f))$, $\ord_p(\pi^*(g))\geq 2$. It follows that if $F$ is the exceptional divisor
on the blow-up of $B$ at $p$, then for every $(\lambda_1,\lambda_2)\in\LCT(f,g)$
we have
$$2\lambda_1+2\lambda_2\leq \lambda_1\cdot \ord_F(f)+\lambda_2\cdot\ord_F(g)\leq \ord_F(K_{-/X})+1=3.$$
Example~\ref{example3} a) shows that there are $f$ and $g$
such that $\LCT(f,g)=\{(\lambda_1,\lambda_2)\in [0,1]^2\mid \lambda_1+\lambda_2\leq 3/2\}$.

We note that if $r\geq 3$, then 
\begin{equation}\label{last}
\LCT(f_1,\ldots,f_r)\subseteq\{(\lambda_1,\ldots,\lambda_r)\in [0,1]^r
\mid \lambda_1+\cdots+\lambda_r\leq 2\}
\end{equation}
for every nonzero $f_1,\ldots,f_r\in (x,y)$. Indeed, we see by considering the
exceptional divisor $E$ on $B$ above
that if $\lct(f_1^{\lambda_1}\cdots f_r^{\lambda_r})\geq 1$, then 
$\sum_i\lambda_i\leq \sum_i\lambda_i\cdot\ord_E(f_i)\leq 2$.
We also observe that
if $f_1,\ldots,f_r$ are general linear forms, then $\pi\colon B\to X$ gives a log resolution of
$(X,(f_1\cdots f_r))$, and we see that in this case we have equality in (\ref{last}).
\end{example}

\providecommand{\bysame}{\leavevmode \hbox \o3em
{\hrulefill}\thinspace}

\end{document}